\documentclass{article}

\usepackage[utf8]{inputenc}
\usepackage{float}
\usepackage{url}
\usepackage[margin =1in]{geometry}
\usepackage{amssymb}
\usepackage{amsthm}
\usepackage{amsmath}
\usepackage{enumerate}
\usepackage{subcaption}
\usepackage{cite}
\usepackage{giraffemath}
\usepackage{algorithm}
\usepackage[ruled,vlined,linesnumbered,algo2e]{algorithm2e}
\usepackage{breqn}
\usepackage{etoolbox}%

\usepackage{xcolor}
\usepackage{hyperref}
\usepackage{cleveref}
\newtheorem{theorem}{Theorem}
\newtheorem{definition}[thm]{Definition}
\newtheorem{proposition}[thm]{Proposition}

\newcommand{\ml}[1]{{{\color{black}{{#1}}}}}
\hypersetup{
    colorlinks=true,
    linkcolor=blue,
    filecolor=magenta,      
    urlcolor=cyan,
    pdftitle={Overleaf Example},
    pdfpagemode=FullScreen,
    }
\usepackage{bbm}

\def\E{\mathbb{E}}
\def\P{\mathbb{P}}
\def\one{\mathbbm{1}}
 
\newtheorem{inequality}{Inequality}

\urlstyle{same}

\crefname{assumption}{assumption}{assumptions}

\begin{document}
\title{A Stochastic Quasi-Newton Method in the Absence of Common Random Numbers}

\author{Matt Menickelly\thanks{Argonne National Laboratory, Lemont, IL, USA. {mmenickelly@anl.gov}.}
\and 
Stefan M.\ Wild\thanks{Applied Math \& Computational Research Div., 
Lawrence Berkeley National Laboratory, Berkeley, CA,
USA {wild@lbl.gov}.}
\and Miaolan Xie\thanks{School of Industrial Engineering, Purdue University, West Lafayette, IN, USA. 
{xie537@purdue.edu}.}
}



\maketitle

\begin{abstract}
We present a quasi-Newton method for unconstrained stochastic optimization. Most existing literature on this topic assumes a setting of stochastic optimization in which a finite sum of component functions is a reasonable approximation of an expectation, and hence one can design a quasi-Newton method to exploit common random numbers. In contrast, and motivated by problems in variational quantum algorithms, we assume that function values and gradients are available only through inexact probabilistic zeroth- and first-order oracles and no common random numbers can be exploited. Our algorithmic framework--based on prior work on the  SASS algorithm--is general and does not assume common random numbers. We derive a high-probability tail bound on the iteration complexity of the algorithm for nonconvex and strongly convex functions. We present numerical results demonstrating the empirical benefits of augmenting SASS with our quasi-Newton updating scheme, both on synthetic problems and on real problems in quantum chemistry. 
\end{abstract}

\maketitle

\section{Introduction}

Quasi-Newton (QN) methods have a history in deterministic optimization that now exceeds 50 years, including the celebrated papers by Broyden \cite{Broyden1970}, Fletcher \cite{Fletcher1970}, Goldfarb \cite{Goldfarb1970}, and Shanno \cite{Shanno1970}. 
Classically, QN methods are designed for the unconstrained minimization of a strongly convex and twice continuously differentiable objective function $\phi: \R^n \to \R$.
The usual direction $d_k$ in the $k$th iteration of a standard Newton's method in such a setting would be defined by
$d_k=B_k^{-1}\nabla \phi(x_k)$,
with  $B_k = \nabla^2 \phi(x_k)$.
QN methods, however, assume that the Hessian inverse is computationally expensive or otherwise unavailable, and they seek to replace $H_k\triangleq B_k^{-1}$ with a matrix updated at every iteration with locally relevant information concerning the curvature of $\phi$.
 In particular, before the iteration counter augments from $k$ to $k+1$, 
 QN methods essentially select a matrix $B_{k+1}$ that satisfies the secant condition
 \begin{equation}\label{eq:secant_eq}
 B_{k+1}(x_{k+1}-x_k) = \nabla \phi(x_{k+1}) - \nabla \phi(x_k)
 \end{equation}
 and, given a precise metric, is closest to $B_k$. 
 When step sizes are properly controlled, and under appropriate assumptions on $\phi$ and the starting point $x_0$, employing such updates results in superlinear convergence of a QN method; see Chapter 6 in \cite{NW} for a detailed introduction to this topic.
 
 To aid our discussion, we  introduce two pieces of notation,
 \begin{equation}
 \label{eq:sydef}
 s_k \triangleq x_{k+1}-x_k \quad \text{ and } \quad y_k \triangleq g(x_{k+1}) - g(x_k),
 \end{equation}
where $g(x_k)$ and $g(x_{k+1})$ are some approximations of $\nabla \phi(x_{k})$ and $\nabla \phi(x_{k+1})$, respectively.  Observe that when we replace $g(x_{k+1}), g(x_k)$ in \cref{eq:sydef} with $\nabla \phi(x_{k+1}), \nabla \phi(x_k)$, \cref{eq:secant_eq} simplifies to $B_{k+1}s_k = y_k$. 

Empirical risk minimization is critical in machine learning.
In such a setting, a typical objective function has the form 
\begin{equation}
    \label{eq:erm}
    \phi(x) = \displaystyle\frac{1}{N} \displaystyle\sum_{i=1}^N F(x;\xi_i),
\end{equation}
where the \emph{loss function} $F$ is a function of both model parameters $x$ and a data point drawn from a (large) finite set $\{\xi_1,\xi_2,\dots,\xi_N\}$; $
\phi$ is sometimes assumed to be (strongly) convex in $x$. 
While one could simply apply a QN method directly to the minimization of \cref{eq:erm}, this is often undesirable in practice because computing the full gradient $\nabla \phi(x_k)$ at every iteration involves a gradient evaluation of $N$ distinct loss functions. 
Thus, in recent decades, researchers have applied 
\emph{minibatching} from the stochastic gradient literature to QN methods. 
Such methods define the gradient estimator $g(x)$ in \cref{eq:sydef} as
$$g_{\mathcal{B}}(x) = \displaystyle\frac{1}{|\mathcal{B}|} \sum_{i\in\mathcal{B}} \nabla F(x; \xi_i),$$
where $\mathcal{B}\subseteq\{1,2,\dots,N\}$. 
By employing secant updates with a \emph{common} batch of indices on each iteration, that is, by replacing
$$y_k = g_{\mathcal{B}_{k}}(x^{k+1}) - g_{\mathcal{B}_{k}}(x_k),$$
theoretically and empirically useful stochastic quasi-Newton methods can be derived. 
For an excellent survey on this topic, see \cite{Mokhtari2020}. 
We draw particular attention to 
online L-BFGS (oLBFGS) \cite{Bordes2009},
regularized stochastic BFGS (RES) \cite{Mokhtari2014},
stochastic L-BFGS (SLBFGS) \cite{moritz2016linearly},  
and incremental quasi-Newton methods (IQNs) \cite{Mokhtari2018};
we note that in \cite{Mokhtari2018} the authors demonstrate that IQN exhibits a superlinear convergence rate under appropriate assumptions, thereby recovering deterministic guarantees up to constants. 

We also note that the stochastic quasi-Newton methods so far described are not precisely for stochastic optimization; instead, they are \emph{randomized} methods for solving \emph{deterministic} optimization problems. 
These stochastic quasi-Newton methods fundamentally employ common random numbers (CRNs) by allowing for control on batches $\mathcal{B}_k$ when updating the matrix $B_{k+1}$. 
Broadly speaking, optimization methods employing CRNs in stochastic optimization enjoy better worst-case complexity than those that do not employ CRNs. 
As an example, in zeroth-order optimization, this is well documented in the study of one-point (i.e., CRN-free) bandit methods 
(see, e.g., \cite{Flaxman2005,Bach2016}) versus two-point (i.e., employing CRNs) bandit methods (see, e.g., \cite{Duchi2015}). 
Thus, although we do not prove any information-theoretic lower bounds in this paper, it is not reasonable to expect non-asymptotic superlinear convergence of a QN method in a CRN-free setting with weak assumptions on the input (introduced in \Cref{sec:assumptions}) as presented here; 
the best results we will demonstrate guarantee linear convergence with overwhelmingly high probability. 

On the theoretical side, this paper is related to a recent line of work \cite{gratton2018complexity, jin2021high, cao2022first, scheinberg2022stochastic, berahas2023sequential} that considers possibly biased stochastic oracles as input for various optimization problems and methods and provides iteration complexity bounds with overwhelmingly high probability. We contribute to this body of work by providing the first high-probability analysis for the quasi-Newton method in the setting where function gradients and values are accessible only through some biased probabilistic oracles and no CRNs can be exploited.

\subsection{Motivation -- Variational Quantum Algorithms}
\label{sec:motivation}
In this paper, we are interested in the minimization of a function where function information is accessible only via stochastic oracles, with no ability to employ CRNs. 
We are particularly motivated by problems in variational quantum algorithms (VQAs) \cite{Cerezo2021}. 
Building on past work \cite{menickelly2022latency}, we focus on one type of VQA, the variational quantum eigensolver (VQE), relevant in applications such as quantum chemistry, although the work presented in this paper is relevant for virtually any VQA.
VQE seeks the ground state energy of a closed quantum system. 
It does so by parameterizing a wavefunction via a (finite) set of variables $x$ so that a sufficiently flexible and expressive ansatz $\psi(x)$ may stand in for the wavefunction.
Then, given the system Hamiltonian $H$, the resulting optimization problem in standard optimization notation\footnote{Quantum physicists would write this objective function in bra-ket notation, that is, $\phi(x) = \langle \psi(x)| H | \psi(x) \rangle$.} is 
\begin{equation}\label{eq:vqe}
\displaystyle\min_x \phi(x) = \psi(x)^\top H \psi(x).
\end{equation}
By inspection, \cref{eq:vqe} is simply the minimization of an (unnormalized) Rayleigh quotient and represents finding the least eigenvalue of $H$, which corresponds to the ground state energy of the system. 

Consistent with the paradigm of quantum computing, evaluations of $\phi$ on a quantum computer cannot be made directly.
Rather, even in an idealized, noise-free quantum computer, evaluations of $\phi$ are available only as independent stochastic \emph{unbiased} estimates $f(x,\xi)\approx\phi(x)$, where the random variable $\Xi$ with realizations $\xi$ has a distribution determined by the moduli of complex coefficients that define a superposition of qubit states. 
Because quantum gates are defined by unitary matrices via the evolution postulate of quantum mechanics, 
one can show (see, e.g., \cite{Schuld2019}) that for many classes of ansatz $\psi(x)$, 
partial derivatives of $\phi$ can be computed via
\begin{equation}
    \label{eq:magic_dd}
    \partial_i \phi(x) = \displaystyle\frac{\phi(x + \frac{\pi}{2}e_i) - \phi(x - \frac{\pi}{2}e_i)}{2},
\end{equation}
where $e_i$ is the $i$th standard basis vector.
In other words, partial derivatives of many VQE objectives are \emph{exactly equal to} a multiple of a specific central finite-difference gradient estimate. 
Moreover, one can obtain an \emph{unbiased} gradient estimate of $\nabla \phi(x)$ via $2n$ \emph{unbiased} function estimations of $\phi$.
We stress unbiasedness here because it is well known that general finite-difference stochastic gradient estimates yield biased estimators of the true gradient (see \cite{berahas2022theoretical} for a recent treatment of this subject). 

On a quantum computer, realizations $\xi$ are purely exogenous and outside of the optimizer's control.
That is, in the language of stochastic optimization, quantum computers are CRN-free, and hence the quasi-Newton methods proposed to date are inapplicable in this setting. 
This situation motivates our subsequent development of a CRN-free quasi-Newton method.

We also highlight that the quantum computing community has gravitated to the deployment of standard optimization methods for the solution of VQA-related problems such as \cref{eq:vqe}. 
For instance, the software package \texttt{Qiskit} \cite{Qiskit} implements several gradient-based methods, including standard gradient descent methods, conjugate gradient methods, and the code \texttt{L-BFGS-B} \cite{zhu1997algorithm}. 
However, since these methods assume \emph{deterministic} and \emph{exact} access to $\nabla\phi(x)$, they are inappropriate for performing stochastic optimization. We highlight this in \Cref{fig:lbfgs_failure}, 
where we consider two particular instances of VQE problems and increase along the $x$-axis the number of realizations of samples used to form the estimates of the gradient (by estimating the function values in \cref{eq:magic_dd} increasingly more accurately via a larger sample average) that is supplied to L-BFGS-B. 

\begin{figure}[ht]\centering
	\begin{subfigure}{.49\textwidth}
		\centering
		\includegraphics[trim=2 -20 20 0, clip, width=\linewidth]{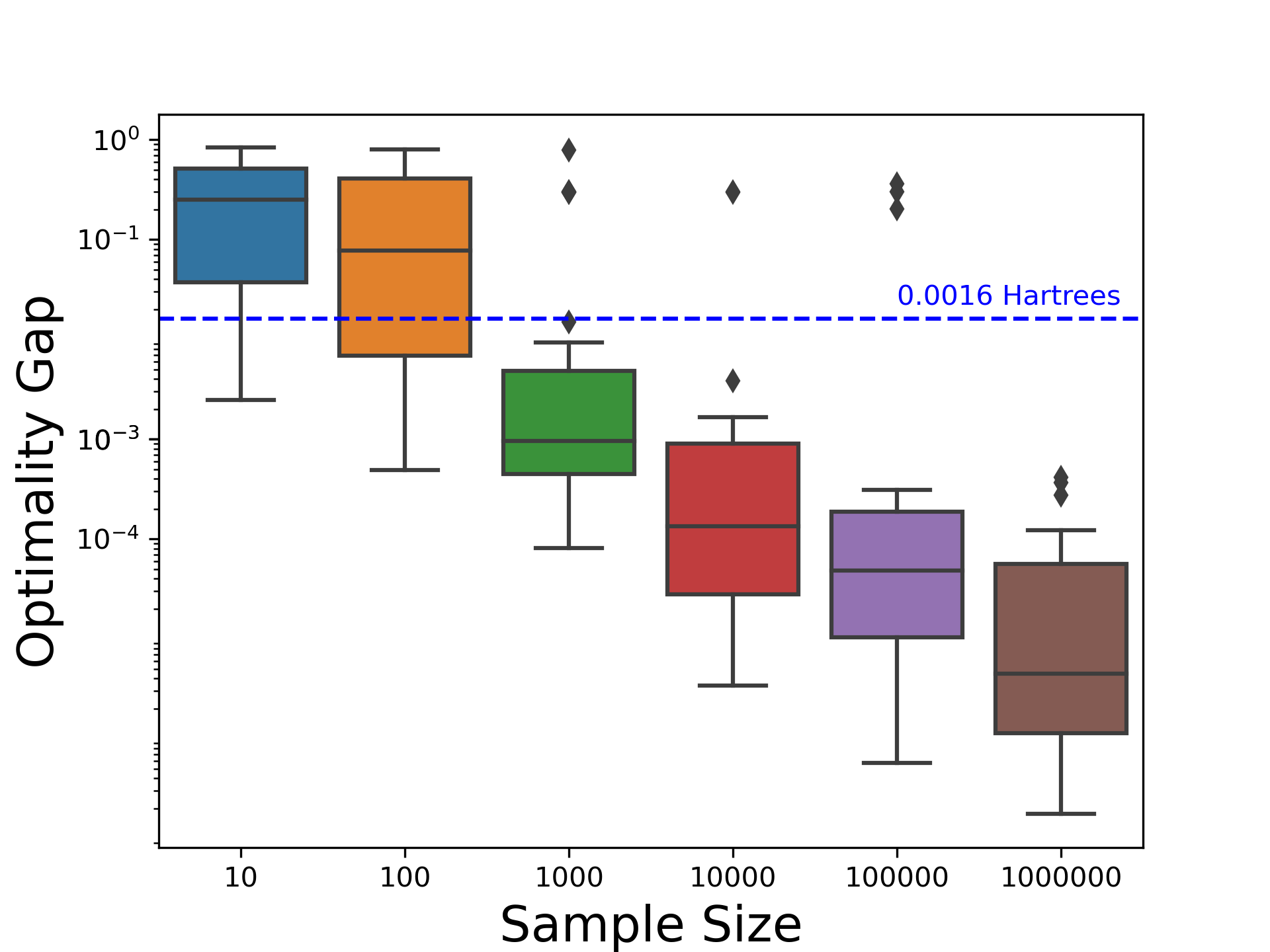}
	\end{subfigure}
	\hfill
	\begin{subfigure}{.49\textwidth}
		\centering
		\includegraphics[trim=2 -20 20 0, clip, width=\linewidth]{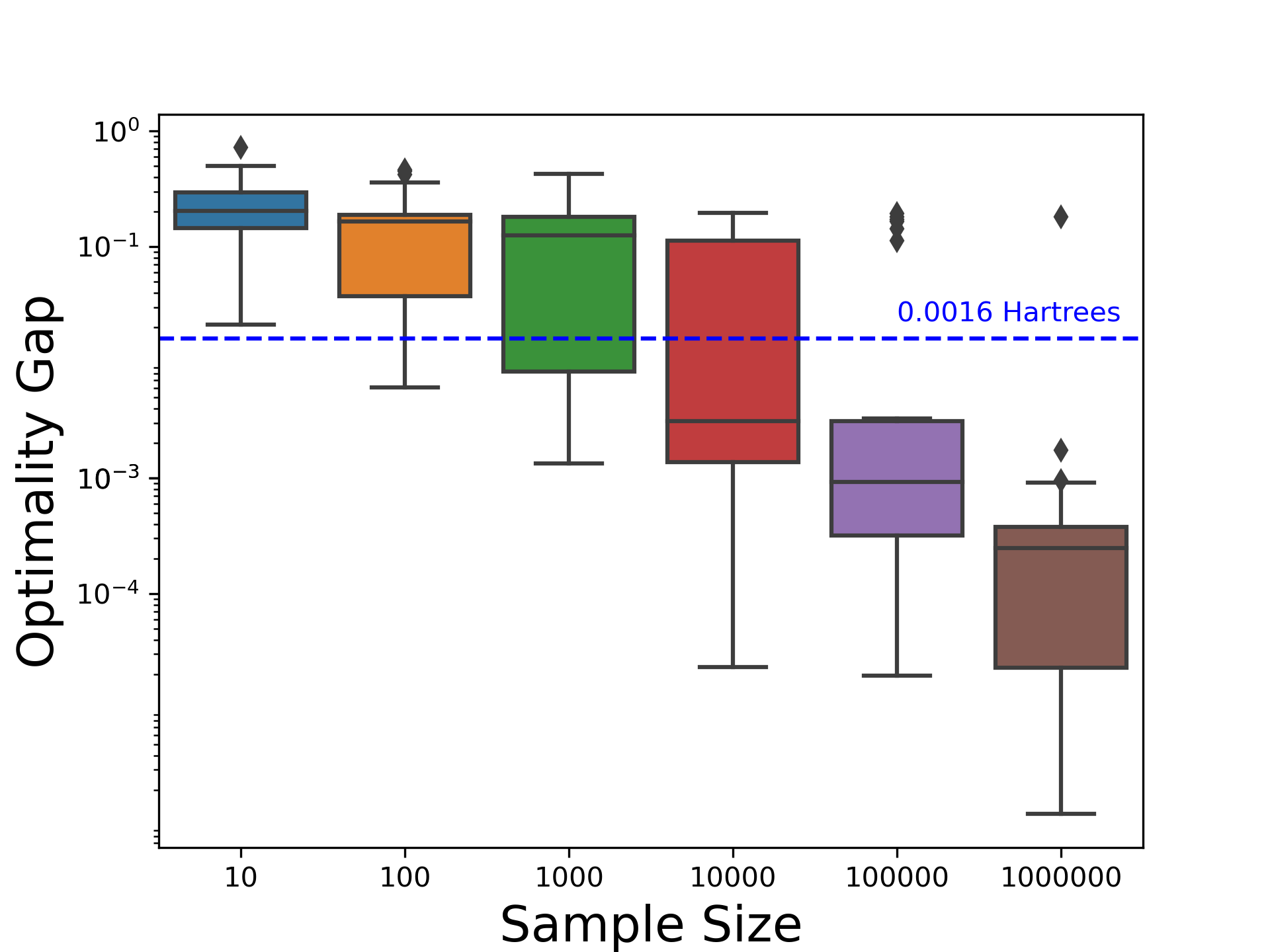}
	\end{subfigure}
	\caption{
Box plots summarizing the optimality gaps of the final solutions returned by 30 runs of L-BFGS-B for different sample budgets with the same initial point. 
The left plot corresponds to a Hamiltonian for an H$_2$ molecule, while the right plot corresponds to a Hamiltonian for a LiH molecule.
}
	\label{fig:lbfgs_failure}
\end{figure}

We remark that as of March 2024, the default number of samples $f(x,\xi)$ that is requested in the cloud-based computing service IBM-Q in a single batch is 4,000 \cite{IBMQ-FAQ};
 thus, the sample sizes of 1,000 and 10,000 in \Cref{fig:lbfgs_failure} are of particular interest.
In general, and as one should expect, the more samples L-BFGS-B uses per evaluation, the better the overall performance is. 
However,  the output of a VQE is essentially meaningless unless it achieves  \emph{chemical accuracy} \cite{helgaker2014molecular}, which translates to an optimality gap of $0.0016$ Hartrees. 
We note in \Cref{fig:lbfgs_failure} that while for the smaller molecule H$_2$ a majority of runs achieve a solution within chemical accuracy, this is certainly not the case for the larger LiH molecule. 
Thus, we stress that we require an L-BFGS method that not only is CRN-free but also is supplied with \emph{convergence guarantees} in terms of the quality of the solution returned.

\subsection{Our Contributions}
In this paper, we propose a CRN-free quasi-Newton method for unconstrained stochastic optimization. The algorithm is based on SASS (Stochastic Adaptive Step Search \cite{jin2021high}), and we call it Q-SASS (Quasi-Newton SASS). 
Q-SASS is a powerful stochastic quasi-Newton optimization method that is demonstrably useful for solving VQA problems on quantum computers, and other optimization problems where the underlying stochasticity cannot be controlled. 
These problems cannot be addressed using traditional stochastic quasi-Newton methods that are CRN-based, making Q-SASS a valuable tool for tackling them. 
We prove that for nonconvex functions, Q-SASS obtains an $\varepsilon$-stationary point in $O(\frac{1}{\varepsilon^2})$ iterations with overwhelmingly high probability.
We additionally prove that for strongly convex functions, Q-SASS converges linearly to an $\varepsilon$-optimal point with an overwhelmingly high probability. 
The advantage of the algorithm is confirmed by empirical experiments.

In the next section we introduce our problem setting and assumptions. In \Cref{sec:algo} the CRN-free Q-SASS algorithm is introduced. \Cref{sec:analysis} introduces the general analysis framework and analyzes the iteration complexity of this stochastic quasi-Newton method for both nonconvex functions and strongly convex functions. Numerical results on both synthetic problems and real problems in quantum chemistry are presented in \Cref{experiment}. We conclude with a summary in \Cref{sec: conclusion}.

\section{Problem Setting and Oracles}\label{sec:assumptions}
We consider the unconstrained minimization of a function $\phi:\R^n\to\R$. We begin by making a standard assumption on the function $\phi$. 
\begin{assumption}\label{ass:Lipschitz}
	The objective function $\phi:\R^n\to\R$ is bounded from below by some constant $\phi^*$, and the gradient function $\nabla \phi$ is $L$-Lipschitz continuous. 
\end{assumption}  

While we will always be assuming \Cref{ass:Lipschitz} in what follows, we will sometimes make the following additional assumption in order to provide stronger results in a strongly convex setting. 
\begin{assumption}\label{ass:strongly_convex}
	The objective function $\phi$ is $\beta$-strongly convex.
	That is,
	$$\phi(x) \geq \phi(y) + \nabla\phi(y)^T(x-y) + \frac{\beta}{2}\norm{x-y}^2, \quad \text{for all $x, y \in \R^n$}.$$
\end{assumption}
We remark that we could, in fact, relax \Cref{ass:strongly_convex} to satisfy the Polyak--{\L}ojasiewicz inequality \cite{polyak1963gradient}.


We now discuss our optimization framework. At a high level, the framework is comprised of two entities: The algorithm, and the oracles. Any information related to the function is provided to the algorithm by querying the oracles. 
Throughout, we assume that our algorithm has access to a \emph{zeroth-order oracle} and a \emph{first-order oracle}. 
When queried at a point $x$, the zeroth-order oracle returns some estimate of $\phi(x)$ and the first-order oracle returns some estimate of $\nabla\phi(x)$. 

To prove meaningful convergence results, one must naturally make assumptions concerning the oracles, but one wishes for these assumptions to be as broad as allowable to encapsulate defects in estimates such as bias, noise, or adversarial corruption. 
For instance, expected risk minimization problems in machine learning may encounter outliers and distribution shifts; federated learning can experience system errors or failures, and even be subjected to adversarial attacks; derivative-free optimization problems may face measurement errors; and in both simulation optimization and VQA problems, noise may hamper progress due to insufficient sampling. In all of these settings, one only has access to possibly unreliable estimates of the function values and gradients. 

Oracles can be constructed in a variety of ways. For example, in empirical risk minimization, the zeroth- and first-order oracle are typically derived from averaging the loss function values and gradients over a minibatch drawn from a fixed library of data samples.
In VQA problems as described in \Cref{sec:motivation}, the zeroth-order oracle is obtained by averaging independent stochastic estimates of $\phi(x)$, and in many settings, a first-order oracle can be obtained from the zeroth-order oracle via \eqref{eq:magic_dd}. The precise, but sufficiently general, requirements on the oracles needed for our theoretical results are encoded in the following two assumptions.

\begin{assumption}\label{ass:zeroth_order_oracle}
	{\bf Probabilistic zeroth-order oracle.}
	Given  a point $x\in\R^n$,  the oracle computes  $f(x,\Xi)$, with realization $f(x,\xi)$, a (random) estimate of the function value $\phi(x)$. 
	The second argument $\Xi$, is a random variable (whose distribution may depend on $x$), with probability space $(\Omega, \mathcal{F}_{\Omega}, P)$.  
The absolute value of the estimation error $e(x,\Xi)$, with realization $e(x,\xi)=|f(x,\xi)-\phi(x)|$, satisfies, for all $x\in\R^n$, 
	\begin{equation}\label{eq:zero_order1}
		{\mathbb E_\Xi}\left [ e(x,\Xi)\right ]\leq \varepsilon_f \ \text{and }
	\end{equation}
	\begin{equation}\label{eq:zero_order2}
		{\mathbb E_\Xi}\left [\exp\{\lambda (e(x,\Xi)-\E_{\zeta}[e(x,\zeta)])\}\right ]\leq \exp\left(\frac{\lambda^2\nu^2}{2}\right), \quad \forall \lambda\in \left[0,\frac{1}{b}\right]
	\end{equation}
	for some non-negative parameters $(\varepsilon_f,\nu,b)$.
	\end{assumption}
	
 In the requirements for the zeroth-order oracle, \eqref{eq:zero_order1} simply states that the first moment of $e(x,\Xi)$ is bounded by $\varepsilon_f$, and 
	\eqref{eq:zero_order2} states that the noise in the function value estimates should be light-tailed. Note that \eqref{eq:zero_order2} is a weaker assumption than $e(x,\Xi)$ being subexponential, since \eqref{eq:zero_order2}  requires only  the right tail of $e(x,\Xi)$ decay at least exponentially.
 In particular, \eqref{eq:zero_order1} and \eqref{eq:zero_order2} taken together imply that we never require arbitrarily accurate zeroth-order oracles; as a result, one should not expect that a convergence result of any method employing such an oracle should offer convergence to a stationary point.   

We remark that in the definition above, $\Xi$ captures all sources of randomness in the oracle, and it may depend on $x$ and other parameters. 
    To continue with our two examples, in empirical risk minimization, $\Xi$ represents the random minibatch of samples used to estimate the function value.
    In a VQA setting, $\Xi$ is derived from measurements of $\phi(x)$ described by probabilities assigned to quantum states.

Next, we introduce the requirements on the first-order oracle.

\begin{assumption}\label{ass:first_order_oracle}
 {\bf Probabilistic first-order oracle.}	 
	Given a point $x\in\R^n$ and a constant $\alpha> 0$, the oracle computes $g(x,\alpha,\Xi^\prime)$ with realization $g(x,\alpha,\xi^\prime)$, a (random) estimate of the gradient $\nabla \phi(x)$.
	The third argument of $g$, $\Xi^\prime$, is a random variable (whose distribution may depend on $x$), with probability space  $(\Omega^\prime, \mathcal{F}_{\Omega^\prime}, P')$. 
	The function $g$ satisfies, for all $x\in\R^n$ and for all $\alpha>0$, 
	\begin{equation}\label{eq:first_order}
		{\mathbb P_{\Xi^\prime}\left (\|g(x,\alpha,\Xi^\prime)-\nabla \phi(x)\|\leq \max \{\varepsilon_g, \min\{\tau, \kappa \alpha\}\|g(x,\alpha,\Xi^\prime)\| \}\right)\geq 1-\delta,}
	\end{equation}
	for some non-negative parameters $(\varepsilon_g,\tau,\kappa,\delta)$, where $\delta\in [0,\frac{1}{2})$.
\end{assumption}

In \Cref{ass:first_order_oracle}, the input $\alpha$ to the oracle represents the step size in the quasi-Newton method that we will describe later. 
Thus, \Cref{ass:first_order_oracle} effectively says that as step sizes decrease, the oracle must be proportionally more accurate.
We note that \eqref{eq:first_order} says that with probability $1-\delta>\frac 1 2$, the error in the gradient estimate is bounded by the greater of a constant ($\varepsilon_g$) and a quantity that scales linearly with the magnitude of the realized gradient estimate.  With constant probability $\delta < \frac12$, we allow the gradient estimator to be arbitrarily bad. Observe that since there is a constant bias of  $\varepsilon_g$ in the gradient estimates, it is only possible to guarantee convergence to an approximate stationary point.

Just as with the zeroth-order oracle, $\Xi^\prime$ here represents all sources of randomness in the first-order oracle. 
For example, in expected risk minimization, $\Xi^\prime$ would represent the random minibatch used to compute the gradient estimate. In the VQA problem, the first-order oracle is implemented by taking differences of function estimates via \eqref{eq:magic_dd}, and thus $\Xi^\prime$ would represent the randomness in the function value estimates. 
 To simplify notation, we will often omit the dependence of $g(x,\alpha,\Xi^\prime)$ on $\alpha$ and write $g(x,\Xi^\prime)$.


\section{A CRN-Free Quasi-Newton Algorithm}
\label{sec:algo}
The method we propose is based on the Stochastic Adaptive Step Search Method (SASS) introduced in \cite{jin2021high}.
SASS employs the negative of the output of the probabilistic first-order oracle as a search direction.
Our method, on the other hand, effectively uses a (symmetric positive definite) L-BFGS matrix $B$; that is, our search direction also uses the negative of the output of the probabilistic first-order oracle but premultiplied by $B^{-1}$ (or $H$).  
As is common practice in L-BFGS implementations, the matrix $B$ or $H$ will never be stored explicitly, nor will direct matrix-vector multiplications involving $B^{-1}$ be performed. Rather, we will apply the standard two-loop recursion (see Algorithm 7.4 of \cite{NW}). 
The two-loop recursion only requires us to  store two sets of vectors, $S =\left[\begin{array}{lll}
	s_{1}, & \hdots, & s_{m}
\end{array}\right]$ and $Y =\left[\begin{array}{lll}
	y_{1}, & \hdots, & y_{m}
\end{array}\right]$. 
Each vector in $S$ represents a nonzero displacement $x_{k+1} - x_{k}$, whereas each vector in $Y$ represents a corresponding difference in gradient estimates $g(x_{k+1} ,\alpha_{k+1} ,\xi^\prime_{k+1} ) - g(x_{k},\alpha_{k},\xi^\prime_{k} )$. 

Employing gradient estimates that satisfy \Cref{ass:first_order_oracle} only for the L-BFGS matrix inverse $B^{-1}$ leads to obvious challenges. 
If the construction of $B^{-1}$ involves even one particularly poor estimate in $Y$ (as is certainly permitted by the probabilistic nature of \Cref{ass:first_order_oracle}), the quasi-Newton direction can potentially be poor compared to simply taking a negative gradient direction (i.e., replacing $B$ with the identity matrix). 
To address this issue, we deliberately monitor the eigenvalues of $B$ and, when necessary, remove vectors from $S$ and $Y$, so that the spectrum of $B$ lies within the interval $[\sigma_{\mathrm{lb}}^B,\sigma_{\mathrm{ub}}^B]$ for $\sigma_{\mathrm{lb}}^B > 0$ and $\sigma_{\mathrm{ub}}^B < \infty$.
 To be able to do this efficiently, we need an efficient method to check the largest and smallest eigenvalues of $B_k$ in the $k$th iteration. The procedure is described in \cite[Section 2.2]{brust2019dense} but is summarized in \Cref{alg:evalues}. 
 The procedure in \Cref{alg:evalues} exploits the compact representation of $B_k$. 
 
 \begin{algorithm}[h!]
\caption{Computing the extreme eigenvalues of L-BFGS matrix $B$}
\label{alg:evalues}
\textbf{Input: } $S =\left[\begin{array}{lll}
	s_{1}, & \hdots, & s_{m}
\end{array}\right], 
Y = \left[\begin{array}{lll}
	y_{1}, & \hdots, & y_{m}\end{array}\right]$, $c>0$.	

{\bf Set} $B_0 = cI$ and $\Psi = \left[\begin{array}{lll}
	B_{0}S & Y
\end{array}\right]$.

{\bf Compute} the thin QR-decomposition $\Psi = QR$ (i.e., $Q\in\R^{n\times 2m}$ and $R\in\R^{2m\times 2m}$). 

{\bf Compute} $\Phi = S^T Y$. Let $D$ and $L$ denote the diagonal and strict lower triangular parts of $\Phi$, respectively. 

{\bf Set} $\Gamma\gets -\left[\begin{array}{cc}
	S^{T} B_{0} S & L \\
	L^{T} & -D
\end{array}\right]^{-1}$.

{\bf Compute} the largest and smallest eigenvalues of $R\Gamma R^T$, $\sigma_1$ and $\sigma_{2m}$.

\textbf{Return: } Largest eigenvalue of $B$, $\sigma_l=\max\{\sigma_1+c,c\}$ and smallest eigenvalue of $B$, $\sigma_{s} = \min\{\sigma_{2m}+c,c\}$.


\end{algorithm} 

Using \Cref{alg:evalues}, one can obtain the smallest and largest eigenvalues of the Hessian approximation $B$  in $O(nm^2)$ flops, where $n$ is the dimension of the problem and $m$ is a small integer that represents the number of vector pairs L-BFGS keeps in memory. 
To ensure the eigenvalues of $B_k$  are bounded above and below, one may recursively remove $(s_i,y_i)$ pairs from $(S, Y)$ from the oldest to newest, until the spectral bounds $[\sigma_{\mathrm{lb}}^B,\sigma_{\mathrm{ub}}^B]$ are satisfied. This procedure is formally described in \Cref{alg:enforce}.  \ml{By the choice that $c\in [\sigma_{lb}^B,\sigma_{ub}^B]$, this procedure is guaranteed to terminate in $m$ steps. We note that due to \Cref{alg:enforce}, we always have that the eigenvalues of $B_k$ are deterministically bounded below by $\sigma_{\mathrm{lb}}^B$ and bounded above by $\sigma_{\mathrm{ub}}^B$. }

\begin{algorithm}[ht]
 	\caption{Enforcing $B$ has spectrum in $[\sigma_{\mathrm{lb}}^B,\sigma_{\mathrm{ub}}^B]$}
 	\label{alg:enforce}
{\bf Input:}  $S$, $Y$, \ml{bounds $0<\sigma_{\mathrm{lb}}^B \leq \sigma_{\mathrm{ub}}^B < \infty$, and $c\in[\sigma_{\mathrm{lb}}^B,\sigma_{\mathrm{ub}}^B]$}.

{\bf Set} $m = |S|$.

\For{$t = 1,\dots,m$}{
{\bf Run} \Cref{alg:evalues} with input $S,Y$, and $c$ to obtain $\sigma_l$, $\sigma_{s}$.\\
\uIf{$\sigma_l\leq\sigma_{\mathrm{ub}}^B$ and $\sigma_{s}\geq\sigma_{\mathrm{lb}}^B$}
{
{\bf Return:} $S, Y.$
}
\Else{
$S\gets S \setminus \{s_t\}, Y\gets Y\setminus \{y_t\}$.
}
}
\end{algorithm}

We are now prepared to introduce the quasi-Newton stochastic adaptive step search (Q-SASS) method, which is stated in \Cref{alg:aloe_quasi}.
In each iteration, Q-SASS computes a gradient estimate, $g_k$, at the current point $x_k$ using the first-order oracle. It then implicitly updates the quasi-Newton matrix $B_k^{-1}$, using \Cref{alg:enforce} to enforce that the spectrum of $B_k^{-1}$ is bounded. Next, Q-SASS computes the step $d_k = B_k^{-1}g_k$ (using the standard two-loop recursion).  Subsequently,  the candidate point $x_k^+ = x_k - \alpha_k d_k$ is computed, and a check is performed to see if $x_k^+$ appears to provide a sufficient decrease in the function value, using the zeroth-order oracle. If $x_k^+$ does appear to provide sufficient decrease, then $x_k^+$ is accepted as the new iterate,  and the step size $\alpha_k$ is increased in preparation for the next iteration. Otherwise, the iterate is unchanged and the step size is decreased.
We follow the convention that in iterations $k$ where $S,Y$ are empty, the two-loop recursion simply computes $B_k^{-1}g_k = B_0^{-1}g_k = c^{-1}g_k$.

	
  
	
	

\begin{algorithm}[!htpb]
 	\caption{~\textbf{ Quasi-Newton stochastic adaptive step search (Q-SASS)}}
	\label{alg:aloe_quasi}
	{\bf Input:}  Initial point $x_1$,  initial step size $\alpha_1>0$,  constants $\theta, \gamma \in (0,1)$, memory parameter $M$, upper and lower bounds $\sigma_{\mathrm{ub}}^B$, $\sigma_{\mathrm{lb}}^B>0$, $c\in [\sigma_{\mathrm{lb}}^B,\sigma_{\mathrm{ub}}^B]$, inner product tolerance $\theta_{\mathrm{ip}}$, error tolerance $\varepsilon_f$ (see \Cref{ass:zeroth_order_oracle}).  
	
	{\bf Initialize} $S = \emptyset$, $Y = \emptyset$, $x_{\mathrm{prev}} = \emptyset$, $g_{\mathrm{prev}}=\emptyset$.
	
	\For{$k=1, 2, \dots$}{
	{\bf Compute gradient approximation:} \\
	$g_k\gets g(x_k,\alpha_k,\xi_k^\prime)$ via the probabilistic first order oracle.
	
	{\bf Update quasi-Newton matrix: }\\
	\If{$x_{\mathrm{prev}}\neq\emptyset$}{
	$s_k \gets x_k - x_{\mathrm{prev}}$, $y_k \gets g_k - g_{\mathrm{prev}}.$\\
	\If{$\dotp{s_k,y_k}>\theta_{\mathrm{ip}}$}{
	\If{$|S|=M$}{
	$S \gets S\setminus \{s_1\},$ $Y \gets Y\setminus \{y_1\}$.
	}
        $S \gets \left[S, s_k\right],$ $Y \gets \left[Y, y_k\right]$.
	} 
	Apply \Cref{alg:enforce} with inputs $S$, $Y$, $c$, $\sigma_{\mathrm{ub}}^B$, $\sigma_{\mathrm{lb}}^B$.
	}
	{\bf Compute trial step: }\\
	Use two-loop recursion with $S$, $Y$, $B_0=cI$ to compute $d_k\gets B_k^{-1}g_k$.
	
	$x_k^+ \gets x_k - \alpha_k d_k$.
	
	{\bf Test for sufficient decrease: }\\
	$f_k \gets f(x_k,\xi_k)$, $f_k^+ \gets f(x_k^+,\xi_k^+)$ via the probabilistic zeroth order oracle.\\
	\uIf{$f_k^+ \leq f_k - \alpha_k\theta\dotp{d_k,g_k} + 2\varepsilon_f$}{
	{\bf \emph{Successful iteration}:}
	$x_{\mathrm{prev}} \gets x_k$, $g_{\mathrm{prev}}\gets g_k$, 
	$x_{k+1}\gets x_k^+$, $\alpha_{k+1}\gets \gamma^{-1}\alpha_k$.
	}
	\Else{
        {\bf \emph{Unsuccessful iteration}: } 
	$x_{k+1}\gets x_k$, $\alpha_{k+1}\gets\gamma\alpha_k$.
	}
	} 
\end{algorithm}

\section{Analysis and complexity result}\label{sec:analysis}

\subsection{General analysis framework}

Our analysis follows the framework in \cite{jin2021high}. 
Note that the iterations of \Cref{alg:aloe_quasi} define a stochastic process on random variables 
$\{({G}_k, X_k, D_k, A_k, {E}_k, {E}_k^+)\}$
with corresponding realizations 
 $(g_k, x_k, d_k, \alpha_k, e_k, e_k^+)$.
 While the first four of these realized quantities appear explicitly in the statement of \Cref{alg:aloe_quasi}, the latter two do not;
 instead, $e_k$ and $e_k^+$ are shorthand notations for $e(x_k,\xi_k) = |f(x_k,\xi_k)-\phi(x_k)|$ and $e(x_k^+,\xi_k^+)=|f(x_k^+,\xi_k^+) - \phi(x_k^+)|$, respectively. 

 The filtration of the stochastic process defined on 
 $\{({G}_k, X_k, D_k, A_k, {E}_k, {E}_k^+)\}$
 can be stated in terms of the exogenous random variables $M_k = \{\Xi_k, \Xi_k^+, \Xi_k^\prime\}$ with realizations $\{\xi_k, \xi_k^+, \xi_k^\prime\}$. 
 Recall that the randomness of $G_k$ is dictated by $\Xi_k^\prime$ in the first-order oracle, and the randomness of $E_k, E_k^+$ is dictated by $\Xi_k,\Xi_k^+$ in the zeroth-order oracle. 
 The filtration is 
 $\{{\cal F}_k:\, k\geq 0\}$, where ${\cal F}_k=\sigma (M_0, M_1, \ldots, M_k)$, the $\sigma$-algebra generated by $M_0,M_1,\ldots,M_k$.
 The probability measure associated with the stochastic process is derived from the exogenous random variables $M_k$. 
 
Given this stochastic process, one can define a stopping time.
To derive complexity results for \Cref{alg:aloe_quasi}, one has obvious choices of stopping times $T_\varepsilon$ parameterized by $\varepsilon>0$. 
In particular, given only \Cref{ass:Lipschitz}, the natural choice of a stopping time is 
\begin{equation}
\label{eq:nonconvex_stopping}
T_\varepsilon := \min\{k: \|\nabla \phi(X_k)\|\leq \varepsilon\},
\end{equation}
that is, the number of iterations until the norm of the objective gradient is sufficiently small. 
Similarly, if we additionally assume \Cref{ass:strongly_convex}, the associated stopping time is
\begin{equation}
    \label{eq:convex_stopping}
    T_\varepsilon := \min\{k: \phi(X_k)-\phi^*\leq\varepsilon\},
\end{equation}
that is, the number of iterations until the objective function value is sufficiently close to the global optimal. 

Similar to the analysis in \cite{jin2021high}, a major step of our analysis lies in proving that the following very generally stated \Cref{ass:alg_behave} holds for some carefully selected $Z_k$ (which provides a measure for the amount of potential progress that can be made at iteration $k$) in either nonconvex or strongly convex settings for the algorithm. For each $k \geq 0$, we let $Z_k \geq 0$ be a random variable that measures the progress of the algorithm at step $k$ and define 
$$Z_k = \begin{cases}
  \phi(X_k) - \phi^* & \mbox{if } \phi \mbox{ is convex}  \\
  \ln \left( \frac{\phi(X_k) - \phi^*}{\varepsilon} \right) & \mbox{if } \phi \mbox{ is nonconvex}.  
\end{cases}$$

 We now provide the following definition, which characterizes the iterations in which the errors made by the probabilistic first- and zeroth-order oracles are favorable. 
 \begin{definition}
 \label{def:true}
 The $k$th realized iteration of \Cref{alg:aloe_quasi} is \textbf{\emph{true}} provided both
 \begin{enumerate}
 \item $\|g_k - \nabla \phi(x_k)\| \leq \max\{\varepsilon_g,\min\{\tau, \kappa \alpha_k\} \|g_k\|\}$, 
 with $\varepsilon_g$, $\tau$, and $\kappa$ as in \Cref{ass:first_order_oracle}, and
 \item $e_k + e_k^+ \leq 2\varepsilon_f$, where $\varepsilon_f$ is as in \Cref{ass:zeroth_order_oracle}. 
 \end{enumerate}
 \end{definition}


Before introducing \Cref{ass:alg_behave}, 
we define two random variables, $I_k$ and $\Theta_k$, generated by the stochastic process underlying \Cref{alg:aloe_quasi}.

\begin{definition}\label{2RVs}
Recall the definition of a true iteration in \Cref{def:true} and the definition of a successful iteration in \Cref{alg:aloe_quasi}.
    Define the ${\cal F}_k$-measurable random variables
	$$I_k := \one\{\text{iteration $k$ is true}\} \quad \text{ and } \quad \Theta_k := \one\{\text{iteration $k$ is successful}\}.$$
\end{definition}

\begin{assumption}
	 \label{ass:alg_behave}
	There exist 
	\begin{itemize}
	    \item a constant $\bar{\alpha}>0$,
	    \item a nondecreasing function $h: \mathbb{R}_+ \rightarrow \mathbb{R}_+$, 
	    \item a function $r: \R^2 \to \R$ which is non-decreasing and concave in its second argument,
	    \item and a constant $p \in ( \frac12+ \frac{{r(\varepsilon_f, 2\varepsilon_f)}}{h(\bar{\alpha})}, 1]$,
	\end{itemize} 
	such that the following five properties hold for all $k<T_{\varepsilon}$:
	\begin{enumerate}
	\item $h(\bar{\alpha})>{\frac{r(\varepsilon_f, 2\varepsilon_f)}{p-\frac 1 2}}$. (Progress is large enough relative to the error.)
	\item $\mathbb{P}\left(I_{k}=1 \mid \mathcal{F}_{k-1}\right) \geq p,$ for all $k$. (Each iteration is true with probability at least $p$, regardless of the past.)
	\item If $I_{k} \Theta_{k}=1$, then $Z_{k+1}\leq Z_k-h(A_k)+{r(\varepsilon_f, 2\varepsilon_f)}$. (True, successful iterations make progress.)
	\item If $A_{k} \leq \bar{\alpha}$ and $I_{k}=1$, then $\Theta_{k}=1$. (Any small, true iteration is successful.)
	\item $Z_{k+1} \leq Z_{k}+{r(\varepsilon_f, {E}_k+{E}_k^+)}$ for all $k$. (A bound on the damage to the progress in each iteration.)
	\end{enumerate}
\end{assumption}

\Cref{ass:alg_behave} is nontrivial and requires some discussion, which we will provide momentarily.
First, we note that so long as
 \Cref{ass:alg_behave} holds, then we can demonstrate, as in \cite{jin2021high}, a high probability bound on $T_\varepsilon$:

\begin{theorem}
  [Theorem~3.8 in \cite{jin2021high}]
\label{thm:general_complexity}
Let \Cref{ass:zeroth_order_oracle}, and \Cref{ass:first_order_oracle} hold, and suppose \Cref{ass:alg_behave} holds for a given ${\cal F}_k$-measurable random variable $Z_k$.
Let $\bar\alpha,h,r$, and $p$ be as in \Cref{ass:alg_behave}, 
let $\varepsilon_f$ be as in \Cref{ass:zeroth_order_oracle}, let $\alpha_0$,  $\gamma$ be taken from the initialization step of \Cref{alg:aloe_quasi}, 
and let $\nu_r,b_r$ be the parameters associated with the subexponential random variable $r(\varepsilon_f, E_k+E_K^+).$\footnote{A subexponential random variable $Y$ having parameters $\nu,b$ means that $\mathbb{E}\left[\exp\left(\lambda(Y-\mathbb{E}\left[Y]\right])\right) \right] \leq \exp\left(\frac{\lambda^2\nu^2}{2}\right)$ for all $\lambda$ satisfying $|\lambda|<\frac{1}{b}$. Compare with \cref{eq:zero_order2}.}
For any $s\geq 0$, denote $p_{\ell} = \frac12 + \displaystyle\frac{r(\varepsilon_f,2\varepsilon_f)+s}{h(\bar\alpha)}$. Then for any $\hat{p}\in(p_{\ell},p)$ and for any
$$t\geq \displaystyle\frac{\frac{Z_0}{h(\bar\alpha)} + \max\left\{-\frac{\ln{\alpha_0}-\ln{\bar\alpha}}{2\ln{\gamma}},0\right\}}
{\hat{p}-p_{\ell}},$$
it holds that
$$\mathbb{P}\left[T_\varepsilon \leq t \right] \geq 1 - \exp\left(-\displaystyle\frac{(p-\hat{p})^2}{2p^2}t\right) - \exp\left(-\min\left\{\frac{s^2t}{2\nu_r^2},\frac{st}{2b_r}\right\}\right).$$
\end{theorem}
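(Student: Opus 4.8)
The plan is to follow the proof of Theorem~3.8 in \cite{jin2021high}, treating the present statement as a mild generalization in which the per-iteration noise enters through the subexponential random variable $r(\varepsilon_f, E_k + E_k^+)$. The backbone is a Lyapunov/telescoping argument applied to the potential
$$\Phi_k := \frac{Z_k}{h(\bar\alpha)} + \max\left\{0, \frac{\ln(\alpha_k/\bar\alpha)}{-2\ln\gamma}\right\},$$
chosen so that $\Phi_0$ equals exactly the numerator of the claimed bound on $t$, and so that $\Phi_k \geq 0$ for all $k$. The second term tracks a random walk on $\log_{1/\gamma}\alpha_k$, which increases by one on each successful iteration and decreases by one on each unsuccessful one.

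First I would isolate the two sources of randomness that produce the two exponential terms. For the true-iteration count, property~2 of \Cref{ass:alg_behave} gives $\P(I_k = 1 \mid \mathcal{F}_{k-1}) \geq p$, so the indicators $I_k$ conditionally dominate independent Bernoulli$(p)$ variables; a standard lower-tail Chernoff/Azuma bound then yields $\P(\sum_{k<t} I_k \leq \hat p t) \leq \exp(-\frac{(p-\hat p)^2}{2p^2} t)$, the first failure term. For the noise, the variables $r(\varepsilon_f, E_k + E_k^+)$ are subexponential with parameters $(\nu_r, b_r)$; here \Cref{ass:indep_noise} is essential, since it guarantees that the $E_k + E_k^+$ are either deterministically bounded or mutually independent, which is exactly what licenses a Bernstein inequality for $\sum_{k<t}(r(\varepsilon_f, E_k+E_k^+) - \E[r(\varepsilon_f, E_k+E_k^+)])$ and produces the second failure term $\exp(-\min\{\frac{s^2 t}{2\nu_r^2}, \frac{st}{2b_r}\})$. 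Because $r$ is concave and nondecreasing in its second argument and $\E[E_k + E_k^+]\leq 2\varepsilon_f$ by \Cref{ass:zeroth_order_oracle}, Jensen's inequality centers each term at $\E[r(\varepsilon_f, E_k+E_k^+)] \leq r(\varepsilon_f, 2\varepsilon_f)$.

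The core deterministic step is to show that, on the intersection of the two good events above and assuming $T_\varepsilon > t$, the potential cannot remain nonnegative. Telescoping $\Phi_{k+1}-\Phi_k$ and splitting iterations into true and false, property~3 (together with property~4, which forces success whenever $A_k \leq \bar\alpha$ on a true iteration) supplies the productive decrease on true iterations, property~5 bounds the increase on the remaining iterations, and the step-size walk telescopes against the initial penalty already contained in $\Phi_0$. Collecting terms gives, on the good event, a bound of the form $0 \leq \Phi_t \leq \Phi_0 - t(\hat p - p_\ell)$, where the $\frac12$ in $p_\ell$ comes from the net drift of the step-size walk and the $\frac{r(\varepsilon_f,2\varepsilon_f)+s}{h(\bar\alpha)}$ from the centered-plus-slack noise. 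Rearranging contradicts $t \geq \frac{\Phi_0}{\hat p - p_\ell}$, so on the good event $T_\varepsilon \leq t$; a union bound over the two failure events completes the proof.

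The main obstacle is the deterministic telescoping argument just described, and specifically the bookkeeping of iterations at small step sizes. On a true iteration with $A_k$ much smaller than $\bar\alpha$, property~3 guarantees success but only a weak decrease $h(A_k) \ll h(\bar\alpha)$ in $Z_k$, so such iterations do not by themselves produce the uniform per-step drift $-\tfrac12 + \tfrac{r(\varepsilon_f,2\varepsilon_f)}{h(\bar\alpha)}$ that the clean bound needs; they do, however, drive the step-size walk back up toward $\bar\alpha$. Handling these ``recharging'' iterations requires the global random-walk control inherited from the general stochastic-process theorem of \cite{jin2021high} rather than a naive per-iteration inequality, and the genuinely new difficulty here is to verify that this control composes correctly with the subexponential noise concentration---in particular, that centering the noise at $r(\varepsilon_f, 2\varepsilon_f)$ and absorbing the slack $s$ into $p_\ell$ is compatible with the walk's drift threshold of $\tfrac12$.
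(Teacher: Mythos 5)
Your proposal is correct and takes essentially the same route as the paper, which does not reprove this statement but inherits it from Theorem~3.8 of \cite{jin2021high}: the Azuma-type lower-tail bound on the number of true iterations, the Bernstein bound for the sum of the subexponential noise terms (centered at $r(\varepsilon_f,2\varepsilon_f)$ via monotonicity, concavity, and Jensen, with \Cref{ass:indep_noise} supplying independence or deterministic boundedness), and the deterministic step-size random-walk argument with initial potential $\frac{Z_0}{h(\bar\alpha)}+\max\left\{-\frac{\ln\alpha_0-\ln\bar\alpha}{2\ln\gamma},0\right\}$ are precisely the ingredients of that cited proof. Your flagging of the small-step ``recharging'' iterations as the delicate bookkeeping step, resolved by the global walk-counting rather than a naive per-iteration drift, matches how the cited argument actually closes.
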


Put simply, \Cref{thm:general_complexity} essentially says that the probability of the stopping time $T_\varepsilon$ (for both \eqref{eq:nonconvex_stopping} and \eqref{eq:convex_stopping}) being greater than $t$ decays exponentially in $t$, for $t$ sufficiently large (on the order of $\frac{Z_0}{h(\bar\alpha)})$. \ml{We note that for \Cref{ass:alg_behave} to hold, there needs to be a lower bound for the $\varepsilon$ in the stopping time $T_\varepsilon$. The lower bound for $\varepsilon$ is dictated by $\varepsilon_f$ and $\varepsilon_g$, which essentially is the amount of ``bias" in the zeroth- and first-order oracle. The specific expression for the lower bound depends on the function class (strongly convex, non-convex, etc.), and we will state the explicit form in the subsequent subsections. }
 
Thus, we have only to verify that \Cref{ass:alg_behave} holds for \Cref{alg:aloe_quasi}. We will verify that this assumption holds in the nonconvex and strongly convex settings in \Cref{sec:nonconvex} and \Cref{sec:strongly_convex},  respectively. This will lead to specific high probability iteration complexity bounds for \Cref{alg:aloe_quasi} in each setting.

We now pause to unpack what \Cref{ass:alg_behave} actually says, now that the remainder of the analysis is effectively dedicated to showing \Cref{ass:alg_behave} holds in various settings. 
Informally speaking, and in the context of \Cref{alg:aloe_quasi} , $\bar\alpha$ should be viewed as a sufficiently small step size (in the deterministic setting, $\bar\alpha$ is on the order of $\frac 1 L$ for an $L$-smooth function), and $h(\alpha)$ should be viewed as a quantity related to the decrease in the true function value one should expect in a deterministic line search method when the step size $\alpha$ is used. $r(\varepsilon_f, e_k + e_k^+)$ is a bound on the ``damage" to the progress of the algorithm when the errors in the zeroth-order oracle at $x_k$ and $x_k^+$ are $e_k$ and $e_k^+$, respectively.
With this interpretation, \Cref{ass:alg_behave} states that in every iteration until the stopping time is reached, 
\begin{enumerate}
    \item The progress when using the sufficiently small step size is sufficiently large compared to the damage. 
    \item Iterations are true (i.e., the oracles give reasonable outputs) with some sufficiently large constant probability $p$, regardless of the past; 
    \item If an iteration is true and successful, then the progress should resemble, up to some additive error term, the progress observed in a deterministic line search method; 
    \item If the step size is sufficiently small and the iteration is true, then success is guaranteed; and
    \item The damage to the progress of the algorithm is limited by a quantity that scales with the errors in the zeroth-order oracle. 
\end{enumerate}

We will now demonstrate complexity results for \Cref{alg:aloe_quasi} under \Cref{ass:Lipschitz} (that is, the general \emph{nonconvex case}) in \Cref{sec:nonconvex} and then additionally under \Cref{ass:strongly_convex} (that is, the \emph{strongly convex case)} in \Cref{sec:strongly_convex}. 

\subsection{Nonconvex functions}\label{sec:nonconvex}
We begin with the case where $\phi$ is nonconvex; that is, we begin by considering the stopping time $T_\varepsilon$ in \cref{eq:nonconvex_stopping}. We will show that the algorithm obtains an $\varepsilon$-stationary point in $O(\frac{1}{\varepsilon^2})$ iterations with overwhelmingly high probability.

Before directly analyzing the stochastic process, we note that due to \Cref{alg:enforce}, we always have that the eigenvalues of $B_k$ are deterministically bounded below by $\sigma_{\mathrm{lb}}^B$ and bounded above by $\sigma_{\mathrm{ub}}^B$. 
As a result, we can prove the following result concerning realizations of \Cref{alg:aloe_quasi}. 

\begin{proposition}\label{prop:innerproduct}
Let \Cref{ass:Lipschitz} hold, $\sigma_{\mathrm{ub}}=\left(\sigma_{\mathrm{lb}}^B\right)^{-1}$, and $\sigma_{\mathrm{lb}}=\left(\sigma_{\mathrm{ub}}^B\right)^{-1}$, \ml{where $\sigma_{ub}^B\geq \sigma_{lb}^B>0$}. 
For all $k$, 
		\begin{equation}
		\label{eq:innerproduct}
		d_{k}^{T} g_{k} \geq \frac{\sigma_{\mathrm{lb}}}{\sigma_{\mathrm{ub}}}{\left\|d_{k}\right\|\left\|g_{k}\right\|}.
		\end{equation}
\end{proposition}
\begin{proof}
Due to \Cref{alg:enforce}, we have that, for all $k$, $d_k = B_k^{-1}g_k$ satisfies
\begin{equation}\label{eq:ebounds} 
    \sigma_{\mathrm{lb}}\left\|g_{k}\right\| \leq\left\|d_{k}\right\| \leq \sigma_{\mathrm{ub}}\left\|g_{k}\right\|.
\end{equation}
Thus, \cref{eq:innerproduct} is satisfied, because
	$$\frac{\sigma_{\mathrm{lb}}\norm{d_k}\norm{g_k}}{\sigma_{\mathrm{ub}}}\leq \sigma_{\mathrm{lb}}\norm{g_k}^2 \leq d_k^Tg_k.$$
 \qed
\end{proof}

One may have noticed that because of the presence of the constant $\varepsilon_f$ in \Cref{ass:zeroth_order_oracle} (i.e., noise in function estimates) and also because of the presence of the constant $\varepsilon_g$ in \Cref{ass:first_order_oracle} (i.e., bias in gradient estimates), it is unreasonable to expect \Cref{alg:aloe_quasi} to attain arbitrary accuracy $\varepsilon>0$. Instead, the size of the convergence neighborhood is determined by the amount of bias in the oracles.
The following inequality provides
the accuracy achievable in light of these limitations.
\begin{inequality}
\label{ass:inequality}
The accuracy level $\varepsilon$ satisfies
    	\begin{equation*}
		\varepsilon > \max\left\{\frac{\varepsilon_g}{\eta}, \,\sqrt{\frac{4\varepsilon_f}{M_1\bar\alpha(p - \frac12)}  } \right\},
		 ~\text{for some}~ 
		\eta\in \left(0, \, \displaystyle\frac{1-\theta K}{1+(1-\theta)K}\right) ~\text{and}~ p>\frac 1 2,
	\end{equation*}
where $K=\frac{\sigma_{\mathrm{lb}}}{\sigma_{\mathrm{ub}}}$
	and $M_1=\min\left\{\displaystyle\frac{\sigma_{\mathrm{lb}}\theta}{(1+\tau)^2}, \, \sigma_{\mathrm{lb}}\theta(1-\eta)^2\right\}.$
	Here, $\varepsilon_f$ is from \Cref{ass:zeroth_order_oracle}, 
	$\varepsilon_g$, $\tau$ and $\kappa$ are from \Cref{ass:first_order_oracle}, 
	$\bar\alpha$ and $p$ are from \Cref{ass:alg_behave} 
 (we will specify their values in \Cref{prop:ass_nonconvex}), 
 $\sigma_{\mathrm{lb}}$, $\sigma_{\mathrm{ub}}$ are as defined in \Cref{prop:innerproduct}, and $\theta$ is from the initialization step of \Cref{alg:aloe_quasi}. 
\end{inequality}

We now  show \Cref{ass:alg_behave} is indeed satisfied by \Cref{alg:aloe_quasi} when applied to nonconvex functions.

\begin{proposition}[\Cref{ass:alg_behave} holds in the nonconvex case]
	\label{prop:ass_nonconvex} 
	Let the stopping time $T_\varepsilon$ be given by \cref{eq:nonconvex_stopping}, and define 
	$$Z_k := \phi(X_k) - \phi^*.$$
 Suppose 
 \Cref{ass:Lipschitz}, \Crefrange{ass:zeroth_order_oracle}{ass:first_order_oracle} and \Cref{ass:inequality} hold. 
	Then, \Cref{ass:alg_behave} holds with 
	\begin{itemize}
	    \item $\bar{\alpha} = \min\left\{\frac{2(1- \theta)\sigma_{\mathrm{lb}}}{\left({2\kappa+ L \sigma_{\mathrm{ub}}}\right)\sigma_{\mathrm{ub}}},
 \frac{2\left((1-\theta)(1-\eta)\frac{\sigma_{\mathrm{lb}}}{\sigma_{\mathrm{ub}}}-\eta\right)}{L\sigma_{\mathrm{ub}}(1-\eta)} \right\}$.
 		\item $h(\alpha) = M_1\alpha\varepsilon^2$, where $M_1=\min\left\{\frac{\sigma_{\mathrm{lb}}\theta}{(1+\tau)^2}, \,\sigma_{\mathrm{lb}}\theta(1-\eta)^2\right\}$. 
 	    \item $r(\varepsilon_{f}, E_k+E_k^+)= 2\varepsilon_{f}+E_k+E_k^+$.
		\item $p = 1 - \delta$ (for bounded noise), or $p = 1-\delta -\exp\left(-\min\{\frac{u^2}{2\nu^2},\frac{u}{2b}\}\right)$ otherwise. Here $u = \inf_x \{\varepsilon_f - \E[e(x)]\}$. Furthermore, assume $p > \frac12$. 
		
	\end{itemize}
\end{proposition}

For ease of reading, the proof of \Cref{prop:ass_nonconvex} is deferred to \Cref{sec:propncproof} in the Appendix. 

Combining \Cref{thm:general_complexity} with \Cref{prop:ass_nonconvex} and noting that $r(\varepsilon_f,E_k+E_k^+) = 2\varepsilon_f + E_k + E_k^+$ is clearly a subexponential random variable with parameters $(\nu_r,b_r) = (2\nu,2b)$, where $\nu$ and $b$ are from \Cref{ass:zeroth_order_oracle},
we immediately obtain the following explicit iteration complexity bound for \Cref{alg:aloe_quasi} in the nonconvex case. 

\begin{theorem}
	[Iteration complexity for \Cref{alg:aloe_quasi} for nonconvex functions]
	\label{thm:nonconvex_complexity}
	Let the stopping time $T_\varepsilon$ be given by \cref{eq:nonconvex_stopping}, and assume	\Cref{ass:Lipschitz}, \Crefrange{ass:zeroth_order_oracle}{ass:first_order_oracle}, and that $\varepsilon$ satisfies \Cref{ass:inequality}. 
	Let $s\geq 0$, and denote $p_{\ell} = \frac12 + \displaystyle\frac{4\varepsilon_{f}+s}{M_1\bar\alpha\varepsilon^2}$.
	Then, for any $\hat{p}\in(p_{\ell},p)$ and any
$$t\geq \displaystyle\frac{\frac{\phi(x_0)-\phi^*}{M_1\bar\alpha\varepsilon^2} + \max\left\{-\frac{\ln{\alpha_0}-\ln{\bar\alpha}}{2\ln{\gamma}},0\right\}}
{\hat{p}-p_{\ell}},$$
 we have that
	$$\P\left(T_\varepsilon \leq t\right) \geq 1 - \exp\left(-\frac{(p-\hat{p})^2}{2p^2}t\right) - \exp{\left(-\min\left\{\frac{s^2t}{{8\nu^2}},\frac{st}{{4b}}\right\}\right)},$$ 
where  $\bar\alpha$, $M_1,$ and $p$ are as in \Cref{prop:ass_nonconvex}.
\end{theorem}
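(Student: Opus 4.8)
The plan is to obtain this theorem as a direct corollary of the general complexity bound in \Cref{thm:general_complexity}, feeding in the specific instantiation of \Cref{ass:alg_behave} supplied by \Cref{prop:ass_non-convex}. That proposition already verifies all five properties of \Cref{ass:alg_behave} for the choices $Z_k = \phi(X_k) - \phi^*$, $h(\alpha) = M_1\alpha\varepsilon^2$, $r(\varepsilon_f, E_k + E_k^+) = 2\varepsilon_f + E_k + E_k^+$, together with the stated $\bar\alpha$ and $p$; since \Cref{ass:Lipschitz}, \Crefrange{ass:zeroth_order_oracle}{ass:indep_noise}, and \Cref{ass:inequality} are all in force, every hypothesis of \Cref{thm:general_complexity} is met. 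The entire argument therefore reduces to two bookkeeping tasks: (i) pinning down the subexponential parameters $(\nu_r,b_r)$ of $r(\varepsilon_f, E_k + E_k^+)$, and (ii) substituting each quantity into the generic conclusion.

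For step (i), I would first note that the deterministic additive constant $2\varepsilon_f$ cancels in the centered moment generating function, so the subexponential parameters of $r(\varepsilon_f, E_k + E_k^+)$ coincide with those of $E_k + E_k^+$. Each summand satisfies the one-sided bound \eqref{eq:zero_order2} with parameters $(\nu,b)$. Applying Cauchy--Schwarz (equivalently, H\"older with conjugate exponents $2,2$) to the product MGF gives
\begin{align*}
\E\!\left[\exp\!\left(\lambda\bigl((E_k+E_k^+) - \E[E_k+E_k^+]\bigr)\right)\right]
&\leq \sqrt{\E[e^{2\lambda(E_k - \E E_k)}]}\,\sqrt{\E[e^{2\lambda(E_k^+ - \E E_k^+)}]} \\
&\leq \exp\!\left(\tfrac{\lambda^2(2\nu)^2}{2}\right),
\end{align*}
valid whenever $2\lambda \in [0,1/b]$, that is, $\lambda \in [0,1/(2b)]$. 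This yields $(\nu_r,b_r) = (2\nu,2b)$. I emphasize that this step uses only the per-estimate MGF bound and requires no independence between $E_k$ and $E_k^+$ within an iteration; \Cref{ass:indep_noise} is instead what powers the cross-iteration concentration step hidden inside \Cref{thm:general_complexity}.

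For step (ii), I would substitute $Z_0 = \phi(x_0) - \phi^*$, $h(\bar\alpha) = M_1\bar\alpha\varepsilon^2$, and $r(\varepsilon_f, 2\varepsilon_f) = 4\varepsilon_f$ into the definition of $p_\ell$ to recover $p_\ell = \tfrac12 + \tfrac{4\varepsilon_f + s}{M_1\bar\alpha\varepsilon^2}$, and into the generic threshold to obtain the stated lower bound on $t$. Finally, inserting $2\nu_r^2 = 8\nu^2$ and $2b_r = 4b$ converts the generic tail $\exp(-\min\{\tfrac{s^2t}{2\nu_r^2}, \tfrac{st}{2b_r}\})$ into $\exp(-\min\{\tfrac{s^2t}{8\nu^2}, \tfrac{st}{4b}\})$, completing the substitution.

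No genuine obstacle arises beyond the subexponential-parameter derivation in step (i), and even there the only point demanding care is that \eqref{eq:zero_order2} is a \emph{right-tail} bound valid only on $\lambda \in [0,1/b]$: the Cauchy--Schwarz step evaluates each factor at $2\lambda$, shrinking the admissible range to $[0,1/(2b)]$, which is precisely why $b_r = 2b$ rather than $b$. I would also confirm that this one-sidedness suffices here, which it does because $r$ is nondecreasing in its second argument and the analysis only ever controls the upper tail of $\sum_k r(\varepsilon_f, E_k + E_k^+)$.
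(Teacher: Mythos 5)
Your proposal is correct and follows essentially the same route as the paper, which obtains the theorem directly by combining \Cref{thm:general_complexity} with \Cref{prop:ass_non-convex} and asserting $(\nu_r,b_r)=(2\nu,2b)$ without derivation. Your Cauchy--Schwarz argument simply fills in the subexponential-parameter computation that the paper dismisses as clear, and all your substitutions ($r(\varepsilon_f,2\varepsilon_f)=4\varepsilon_f$, $h(\bar\alpha)=M_1\bar\alpha\varepsilon^2$, $Z_0=\phi(x_0)-\phi^*$, $2\nu_r^2=8\nu^2$, $2b_r=4b$) match the stated bound exactly.
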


    \Cref{thm:nonconvex_complexity} essentially shows that the number of iterations required by \Cref{alg:aloe_quasi} to attain an $\varepsilon$-stationary point is in $O(\frac{1}{\varepsilon^2})$ with overwhelmingly high probability, and furthermore, the stopping time itself is a subexponential random variable. 
	
\subsection{Strongly convex functions}\label{sec:strongly_convex}
We now consider the strongly convex case, with the stopping time $T_\varepsilon$ defined in \cref{eq:convex_stopping}. We will show  that \Cref{alg:aloe_quasi} converges linearly to a neighborhood of optimality with overwhelmingly high probability.

Analogously to \Cref{sec:nonconvex}, we begin by providing a lower bound on the accuracy achievable by the algorithm with respect to the noise and bias in the oracles. 

\begin{inequality}
    \label{ass:inequality_sc}
The accuracy level $\varepsilon$ satisfies 
 {\tiny
	\begin{align*}
		\varepsilon > \max\left\{\frac{\varepsilon_g^2}{2\beta\eta^2},\, 4\varepsilon_f,\frac{4\varepsilon_f}{\left( 1-\min\left\{\frac{1}{(1+\tau)^2},(1-\eta)^2\right\} \sigma_{\mathrm{lb}}\theta \beta \cdot \min\left\{\frac{2(1- \theta)K}{\left({2\kappa+ L \sigma_{\mathrm{ub}}}\right)},
			\frac{2\left((1-\theta)(1-\eta)K-\eta\right)}{L\sigma_{\mathrm{ub}}(1-\eta)} \right\} \right)^{\frac 1 2-p}-1}
		\right\},
		\end{align*}
  }
		for some $\eta\in \left(0,\,\displaystyle\frac{1-\theta K}{1+(1-\theta)K}\right)$, where $K=\frac{\sigma_{\mathrm{lb}}}{\sigma_{\mathrm{ub}}}$ and $p>\frac 1 2$. 
		Here, $\beta$ is the strong convexity parameter in \Cref{ass:strongly_convex}.
\end{inequality}
Now, as in \Cref{sec:nonconvex}, we show that in the strongly convex setting, under our assumptions, \Cref{ass:alg_behave} holds for a particular choice of $\bar\alpha$, $h$, $r,$ and $p$. 

\begin{proposition}
[\Cref{ass:alg_behave} holds for \Cref{alg:aloe_quasi} in the strongly convex case]
	\label{prop:ass_strongly_convex}
	Let the stopping time be given by \cref{eq:convex_stopping}, and define
	$$Z_k = \ln \left( \frac{\phi(X_k) - \phi^*}{\varepsilon} \right).$$
Suppose \Crefrange{ass:Lipschitz}{ass:first_order_oracle} and \Cref{ass:inequality_sc} hold. 
Then, \Cref{ass:alg_behave} holds with the specific choices 
	\begin{itemize}
		\item $\bar{\alpha} = \min\left\{\frac{2(1- \theta)\sigma_{\mathrm{lb}}}{\left({2\kappa+ L \sigma_{\mathrm{ub}}}\right)\sigma_{\mathrm{ub}}},
	\frac{2\left((1-\theta)(1-\eta)\frac{\sigma_{\mathrm{lb}}}{\sigma_{\mathrm{ub}}}-\eta\right)}{L\sigma_{\mathrm{ub}}(1-\eta)} \right\}$;
	\item $h(\alpha) = \min\left\{ -\ln\left(1-\frac{\alpha\sigma_{\mathrm{lb}}\theta\beta}{(1+\tau)^2}\right), -\ln\left(1-\alpha\sigma_{\mathrm{lb}}\beta\theta(1-\eta)^2 \right)\right\}$;
			\item $r(\varepsilon_f, E_k+E_k^+) = \ln\left(1 + \frac{2\varepsilon_f + E_k+E_k^+}{\varepsilon} \right)$; 
		\item $p = 1 - \delta$ (for bounded noise), or $p = 1-\delta -\exp\left(-\min\{\frac{u^2}{2\nu^2},\frac{u}{2b}\}\right)$ otherwise. Here $u = \inf_x \{\varepsilon_f - \E[e(x)]\}$.
	\end{itemize}
\end{proposition}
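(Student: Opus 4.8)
The plan is to verify each of the five properties of \Cref{ass:alg_behave} for the stated $Z_k = \ln((\phi(X_k)-\phi^*)/\varepsilon)$, $\bar\alpha$, $h$, $r$, and $p$. Two of these come essentially for free. Since $\bar\alpha$ and $p$ coincide exactly with their counterparts in \Cref{prop:ass_non-convex}, and since Properties~2 and~4 concern only the step-acceptance mechanism (the probability that an iteration is true, and the claim that a true iteration with $A_k \le \bar\alpha$ must be successful) rather than the particular form of $Z_k$, their proofs are identical to the non-convex case. For Property~2 I would bound the failure probabilities of the two oracles separately---$\delta$ from \Cref{ass:first_order_oracle}, and under the second alternative of \Cref{ass:indep_noise} the tail $\exp(-\min\{u^2/2\nu^2, u/2b\})$ obtained by applying \cref{eq:zero_order2} to $e_k+e_k^+$---and combine them by a union bound. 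For Property~4 I would invoke $L$-smoothness together with the spectral bounds \cref{eq:ebounds} and the true-gradient accuracy, then check that the prescribed $\bar\alpha$ makes the sufficient-decrease test pass; this is exactly the computation determining the non-convex $\bar\alpha$, which is the same constant.

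The substance of the proof lies in Properties~1, 3, and~5, which all involve the logarithmic potential. For Property~3, on a true and successful iteration I would start from the acceptance test $f_k^+ \le f_k - \alpha_k\theta d_k^T g_k + 2\varepsilon_f$, pass from $f$ to $\phi$ using $e_k+e_k^+\le 2\varepsilon_f$, and apply the norm/inner-product bounds of \Cref{prop:innerproduct} to obtain $\phi(X_{k+1})-\phi^* \le (\phi(X_k)-\phi^*) - \alpha_k\theta\sigma_{lb}\|g_k\|^2 + 4\varepsilon_f$. Next I would lower-bound $\|g_k\|^2$ by $\min\{(1+\tau)^{-2}, (1-\eta)^2\}\|\nabla\phi(X_k)\|^2$, splitting into the two cases of the true-gradient condition: in the relative-error case $\|g_k\|\ge \|\nabla\phi(X_k)\|/(1+\tau)$, while in the $\varepsilon_g$-floor case I would use the first term $\varepsilon_g^2/(2\beta\eta^2)$ of \Cref{ass:inequality_sc} together with the Polyak--{\L}ojasiewicz inequality to guarantee $\varepsilon_g < \eta\|\nabla\phi(X_k)\|$ for $k<T_\varepsilon$, hence $\|g_k\|\ge(1-\eta)\|\nabla\phi(X_k)\|$. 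Applying $\|\nabla\phi(X_k)\|^2 \ge 2\beta(\phi(X_k)-\phi^*)$ then yields $\phi(X_{k+1})-\phi^* \le (1-2q)(\phi(X_k)-\phi^*) + 4\varepsilon_f$, where $q = \alpha_k\sigma_{lb}\theta\beta\min\{(1+\tau)^{-2},(1-\eta)^2\}$ and, by simplifying the $\min$ in $h$, $e^{-h(\alpha_k)} = 1-q$.

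It remains to turn this additive-noise contraction into the multiplicative (log) statement $Z_{k+1}\le Z_k - h(\alpha_k) + r(\varepsilon_f,2\varepsilon_f)$. Writing $W_k = (\phi(X_k)-\phi^*)/\varepsilon$, which satisfies $W_k > 1$ for every $k<T_\varepsilon$ by the definition of the stopping time \cref{eq:convex_stopping}, I would divide through by $\varepsilon$ and use $W_k>1$ to replace the absolute term $4\varepsilon_f/\varepsilon$ by $W_k\,4\varepsilon_f/\varepsilon$, giving $W_{k+1}/W_k \le (1-2q) + 4\varepsilon_f/\varepsilon$. The target $W_{k+1}/W_k \le e^{-h(\alpha_k)}(1+4\varepsilon_f/\varepsilon) = (1-q)(1+4\varepsilon_f/\varepsilon)$ then reduces, after expanding, to $q(4\varepsilon_f/\varepsilon - 1)\le 0$, which holds precisely because of the final term $\varepsilon>4\varepsilon_f$ in \Cref{ass:inequality_sc}; taking logarithms delivers Property~3. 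Property~5 follows by the same device but more simply: on any successful iteration $d_k^T g_k\ge 0$ gives $\phi(X_{k+1})-\phi^*\le(\phi(X_k)-\phi^*)+2\varepsilon_f+e_k+e_k^+$, and dividing by $\varepsilon$ and using $W_k>1$ yields $Z_{k+1}\le Z_k + \ln(1+(2\varepsilon_f+e_k+e_k^+)/\varepsilon) = Z_k + r(\varepsilon_f, E_k+E_k^+)$ (the unsuccessful case is trivial since $X_{k+1}=X_k$). Finally, Property~1 is a pure inequality: rearranging $h(\bar\alpha)>r(\varepsilon_f,2\varepsilon_f)/(p-\tfrac12)$ into $(1-\bar\alpha\sigma_{lb}\theta\beta\min\{(1+\tau)^{-2},(1-\eta)^2\})^{1/2-p}-1 > 4\varepsilon_f/\varepsilon$ reproduces exactly the second term of the max in \Cref{ass:inequality_sc}.

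I expect the main obstacle to be the logarithmic bookkeeping in Property~3. Unlike the non-convex case, where $Z_k=\phi(X_k)-\phi^*$ turns the additive noise $4\varepsilon_f$ directly into the additive term $r$, here the noise must be folded \emph{multiplicatively} into $\ln(1+4\varepsilon_f/\varepsilon)$ underneath a contraction. The two ingredients that make this close---and that I would track carefully---are (i) the factor of $2$ in the PL inequality, half of which defines $h$ (the $1-q$ factor) and half of which supplies the slack spent on absorbing the noise, and (ii) the two appeals to $W_k>1$, valid only before the stopping time, which together with $\varepsilon>4\varepsilon_f$ complete the estimate. The case split forced by the $\varepsilon_g$-floor, handled through the $\varepsilon_g^2/(2\beta\eta^2)$ threshold, is the other place demanding care, to ensure $\|g_k\|^2\ge\min\{(1+\tau)^{-2},(1-\eta)^2\}\cdot 2\beta(\phi(X_k)-\phi^*)$ holds uniformly for $k<T_\varepsilon$.
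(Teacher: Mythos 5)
Your proposal is correct and takes essentially the same route as the paper's proof: the same two-case split on the first-order oracle condition, the same use of the strong-convexity (PL) bound $\|\nabla\phi(X_k)\|^2 \geq 2\beta(\phi(X_k)-\phi^*)$ with half the contraction $1-2m$ defining $h$ and half absorbing the noise via the factorization $1-2m+x \leq (1-m)(1+x)$ for $x\leq 1$ (your reduction to $q(4\varepsilon_f/\varepsilon-1)\leq 0$ is the same algebra), and the same appeals to $\phi(X_k)-\phi^*>\varepsilon$ before the stopping time in Properties~3 and~5. One small correction: Property~4 is not literally identical to the non-convex case---in its $\varepsilon_g$-floor case the inequality $\|G_k-\nabla\phi(X_k)\|\leq \eta\|\nabla\phi(X_k)\|$ must be re-derived from the threshold $\varepsilon\geq \varepsilon_g^2/(2\beta\eta^2)$ together with \cref{eq:strongly_convex}, since the stopping time now constrains function values rather than gradient norms; this is exactly the PL detour you correctly describe for Property~3, and the paper performs it explicitly in its proof of the fourth claim.
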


The proof of \Cref{prop:ass_strongly_convex} is analogous to that of \Cref{prop:ass_nonconvex}. Due to space constraints, the proof is omitted. 

In the strongly convex case, as opposed to the nonconvex case, the subexponential parameters $(\nu_r,b_r)$ of 
$r(\varepsilon_f, E_k+E_k^+) = \ln\left(1 + \frac{2\varepsilon_f + E_k+E_k^+}{\varepsilon} \right)$
are less trivial to derive. 
However, \cite[Proposition 3]{jin2021high} demonstrates that these subexponential parameters are
\begin{equation}\label{eq:sub_para}{\nu_r = b_r = 4e^2\max\left\{\frac{2\nu}{\varepsilon}, \frac{2b}{\varepsilon}\right\} + 4e\left(1+\frac{4\varepsilon_f}{\varepsilon}\right)}.
\end{equation}
Combining this result with \Cref{thm:general_complexity} and \Cref{prop:ass_strongly_convex}, we arrive at a complexity result for the strongly convex case.

\begin{theorem}[Iteration complexity for \Cref{alg:aloe_quasi} for strongly convex functions]
\label{thm:sc_complexity}
Let the stopping time $T_\varepsilon$ be given by \cref{eq:convex_stopping}, and suppose  \Crefrange{ass:Lipschitz}{ass:first_order_oracle} and \Cref{ass:inequality_sc} hold. 
Let $s\geq 0$, and denote 
$$p_\ell = \frac12 + \frac{\ln\left(1 + \frac{4\varepsilon_f}{\varepsilon} \right)+s}{h(\bar\alpha)}.$$ 
Then, for any $\hat{p}\in(p_\ell,p)$ and for any
$$t \geq \displaystyle\frac{\frac{1}{h(\bar\alpha)}\ln\left(\frac{\phi(x_0)-\phi^*}{\varepsilon}\right) + \max\left\{-\frac{\ln \alpha_0 - \ln \bar \alpha}{2\ln \gamma}, 0\right\}}{\hat{p}-p_\ell},$$
we have that
		\begin{align*}
			\P\left(T_\varepsilon \leq t\right) \geq 1 
			- &\exp\left(-\frac{(p-\hat{p})^2}{2p^2}t\right) 
			- \exp\left(-\min\left\{\frac{s^2t}{2\nu_r^2},\frac{st}{2b_r}\right\}\right),
		\end{align*}
  with $\bar\alpha$, $h$, and $p$ as in \Cref{prop:ass_strongly_convex}, and $\nu_r$, $b_r$ as in \cref{eq:sub_para}.
\end{theorem}

\Cref{thm:sc_complexity} shows that \Cref{alg:aloe_quasi} converges linearly to a particular $\varepsilon$-neighborhood of optimality with overwhelmingly high probability, and furthermore the stopping time itself is a subexponential random variable.

\section{Numerical Experiments}\label{experiment}
To examine the benefits of our proposed quasi-Newton enhancement to SASS, we implemented \Cref{alg:aloe_quasi} in Python. Note that both SASS and Q-SASS do not need to leverage CRNs. 
We remark on a \emph{practical} augmentation we made to \Cref{alg:aloe_quasi}. 
To ease the analysis in \Cref{sec:analysis}, we assumed via \Cref{ass:zeroth_order_oracle} that $\varepsilon_f$ is constant across all iterations. 
However, since using a fixed small $\varepsilon_f$ over all iterations makes unnecessarily high accuracy demands on the zeroth-order oracle, it is of practical interest to try to choose them adaptively.
Thus, in our implementation, instead of using a constant $\varepsilon_{f}$,
an adaptive choice is made. 
In particular, in each iteration, we replace the constant value $\varepsilon_f$ in \Cref{ass:zeroth_order_oracle} and \Cref{alg:aloe_quasi} with 
 $$\varepsilon_{f,k}:= \max\left\{\varepsilon_{f}, \frac{1}{100}\alpha_k\theta g_k^Td_k \right\},$$ where $\varepsilon_f$ is chosen to be a fixed small number related to the desired target accuracy and the second term is a fraction of sufficient decrease imposed in \Cref{alg:aloe_quasi}. 
 If the fraction of sufficient decrease is large, the right decision of whether to accept the candidate point can still be made, even with a higher noise level in the function estimates.
 We note that this is a simpler form of adaptivity than can be found for QN methods that employ CRNs for zeroth-order stochastic oracles \cite{bollapragada2021adaptive}.

 We tested our implementation on synthetic problems derived from the \texttt{CUTEst} problem set \cite{cutest}. In particular, we use the same subset of 27 unconstrained problems\footnote{\cite{cartis2019improving} actually considered 30 problems, but we omit problems \texttt{BDEXP, SINEALI,} and \texttt{RAYBENDL}, because they are bound-constrained problems; removing the bound constraints results in problems that are unbounded below.
 }  considered in Appendix D of \cite{cartis2019improving}. 
 We additionally considered a subset of the quantum chemistry problems  in \cite{menickelly2022latency}; 
 we refer readers to that paper for technical details concerning the ansatz $\psi$ employed in \cref{eq:vqe}. Our code is available upon request.
 
\subsection{Synthetic problems with additive and multiplicative noise}\label{sec:synthetic}
To turn the subset of \texttt{CUTEst} problems into stochastic optimization problems, we artificially added noise to the problems. 
We considered two types of noise: \emph{additive noise} and \emph{multiplicative noise}. 
In either setting, 
we let $\xi\sim\mathcal{N}(0,1)$ (that is, $\xi$ follows a standard normal distribution)
and 
$\xi^\prime\sim\mathcal{N}(0_n,I_n)$ (that is, $\xi^\prime$ follows a multivariate Gaussian distribution with zero mean and identity covariance).

In the additive noise setting, 
we define
\begin{equation}
\label{eq:additive_noise}
f(x;\xi) = \phi(x) + \xi \quad \text{ and } \quad 
g(x,\xi^\prime) = \nabla\phi(x) + \xi^\prime.
\end{equation}
In the multiplicative noise setting, we define
\begin{equation}
\label{eq:multiplicative_noise}
f(x,\xi)=\left(1+\frac{\xi}{100}\right)\phi(x) \quad \text{ and } \quad
g(x,\xi^\prime) = \left(\one_n + \frac{1}{100} \xi^\prime\right)\odot\nabla\phi(x),
\end{equation}
where $\odot$ denotes the entrywise product, and $\one_n$ denotes the all-ones vector of length $n$. In both the additive and multiplicative noise settings, $f(x,\xi)$ and $g(x,\xi^\prime)$ are unbiased estimators of $\phi(x)$ and $\nabla\phi(x)$, respectively. 

The oracles for both noise settings can be defined as the average of a number of random realizations. 
In particular, at a point $x$ in the $k$th iteration of \Cref{alg:aloe_quasi}, we consider a sample average of $N_{f,k}$ many observations of $f$ as the zeroth-order oracle  and a sample average of $N_{g,k}$ many observations of $g$ as the first-order oracle, where
\begin{equation}
\label{eq:nfk_ngk}
N_{f,k}= \frac{\mathbb{V}(f(x,\Xi))}{\varepsilon_{f,k}^2} ~\text{ and }~
N_{g,k}=\frac{\mathbb{V}(g\left(x,\Xi^{\prime}\right))}{\delta\varepsilon_{g,k}^2},
\end{equation}
with $\delta$ as in \eqref{eq:first_order}, $$\varepsilon_{g,k} \triangleq \max\{\varepsilon_g,\min\{\tau,\kappa\alpha_k\}\|g_{k-1}\|\},$$
and $\mathbb{V}(\cdot)$ denoting (an estimate of) the variance of the argument estimator. 
We note that the choice of $N_{g,k}$ in \cref{eq:nfk_ngk} is motivated by the Chebyshev inequality, since if $\mathbb{V}(\cdot)$ were replaced with the true variance and $\|g_{k-1}\|$ were replaced by $\|g_{k}\|$, this sample size would guarantee that the oracle assumption is satisfied. 

The choice of $N_{f,k}$ is designed to approximately satisfy \Cref{ass:zeroth_order_oracle} with the adaptive $\varepsilon_{f,k}$.
Let $e(x,\xi)=\abs{\frac{1}{N_{f,k}}\sum_{i=1}^{N_{f,k}}f(x,\xi_i)-\phi(x)}$. To satisfy \Cref{ass:zeroth_order_oracle}, we need $\EE(e(x,\Xi))\leq \varepsilon_{f,k}$. 
Provided $N_{f,k}$ satisfies \cref{eq:nfk_ngk} or equivalently 
$\varepsilon_{f,k}= \sqrt{\frac{\mathbb{V}(f(x,\Xi))}{N_{f,k}}},$ 
we have that
$$\varepsilon_{f,k}= \sqrt{\frac{\EE[(f(x,\Xi)-\phi(x))^2]}{N_{f,k}}}
= \sqrt{{\EE[e(x,\Xi)^2]}}
\geq \sqrt{(\EE[e(x,\Xi)])^2}
={\EE[e(x,\Xi)]}.$$
By replacing the variance with sample variance in 
\cref{eq:nfk_ngk}, 
we can choose $N_{f,k}$ so that the estimator  approximately satisfies
\Cref{ass:zeroth_order_oracle}.
In our implementation, we implicitly assume that the gradient and variances do not change too rapidly between consecutive iterations, so we take the gradient and the sample variances of the last iteration as reasonable estimates. 

We now discuss choices of various parameter values in \Cref{alg:aloe_quasi}. 
In the initialization step of \Cref{alg:aloe_quasi}, we choose $\theta = 0.2, \gamma =0.8$, and $\alpha_0 = 1$.
Although we do not allow access to (estimates of) the true Hessian $\nabla^2 \phi(x)$ in any of our methods, for the sake of our experiments we assume that we have access to $\nabla^2 \phi(x_0)$ so that we may set
$\sigma_{\mathrm{ub}}^B=\max\{\|\nabla^2 \phi(x_0)\|,10^4\}$,\footnote{We note that in many applications, some reasonable upper bound $\sigma_{\mathrm{ub}}^B$ on the objective Hessian norm $\|\nabla^2 \phi(x)\|$ over a neighborhood of $x_0$ can often be provided by a domain expert, and our artificial $\sigma_{\mathrm{ub}}^B$ used in these tests is a proxy for such a bound.} where the matrix norm is the operator norm.
In turn, we set $\sigma_{\mathrm{lb}}^B = 1/\sigma_{\mathrm{ub}}^B$. 
We choose $\delta=0.1$ and $\tau=10$ in our definition of $N_{g,k}$ in \cref{eq:nfk_ngk}.
Given a target gradient norm $\bar\varepsilon$, we define a secondary parameter $\mu$ and set $\varepsilon_g = \mu\bar\varepsilon$ and $\varepsilon_f = \varepsilon_g^2$. 
In our experiments, we will demonstrate the effect of varying $\kappa$ and $\mu$, both of which control the precision of the oracles;
however, unless otherwise stated, we set $\kappa=1$ and $\mu=0.01$ as default values.

We first compare our implementation of Q-SASS against a natural baseline, SASS, which was studied in \cite{jin2021high}. 
SASS can be viewed as a special case of Q-SASS wherein we set the memory parameter, $M$, to be 0.
As a default setting for Q-SASS, we choose $M=10$.
We compare SASS and Q-SASS via performance profiles \cite{dolan2002benchmarking}.
By providing 30 different random seeds at the start of an optimization run, 
we yield $30\times 27 = 810$ instances represented in each performance profile. 
Motivated by the same latency concerns discussed in \cite{menickelly2022latency}, we measure the performance of solvers in two separate metrics: the total number of iterations to reach a given stopping time and the total number of samples observed to reach a given stopping time. 
In the latter metric we define ``a sample"  as an observation of either $f(x;\xi),$ or $g(x;\xi')$. 
Although in this paper we are mainly characterizing the iteration complexity of the algorithm, one may use the high-probability step size lower-bound result as in \cite{jinhigh} to obtain a high-probability upper bound on the sample complexity as well.
 The stopping time for each \texttt{CUTEst} problem is defined as \ml{the first iteration $k$ with $\norm{\nabla\phi(x_k)}\leq 10^{-3}\|\nabla \phi(x_0)\|$.}
 If a solver did not hit the stopping time within an appropriate budget of a maximum of $\min\{30000,500n\}$ iterations for the iteration metric, or a maximum of $10^{20}$ observed samples for the sample metric, then the performance metric of the algorithm for that problem is set to $\infty$.

The performance profiles for these initial additive noise and multiplicative noise tests are shown in \Cref{fig:additive} and \Cref{fig:multiplicative}, respectively. 
\ml{The way to read these plots is as follows. There is one curve for each algorithm. Consider a given curve and a point on that curve. Suppose the $x$-value of the point is $r$. Then, its corresponding $y$-value is the percentage of all instances that the algorithm was able to solve (as defined above) while using at most a factor $r$ times the cost (e.g. number of iterations or samples) of the best algorithm for that instance.}
We see that Q-SASS has an advantage over the baseline SASS algorithm  in terms of both iteration metric and  sample metric with both types of noise.

\begin{figure}[h!]\centering
	\begin{subfigure}{.49\textwidth}
		\centering
		\includegraphics[trim=3 -20 20 0, clip, width=\linewidth]{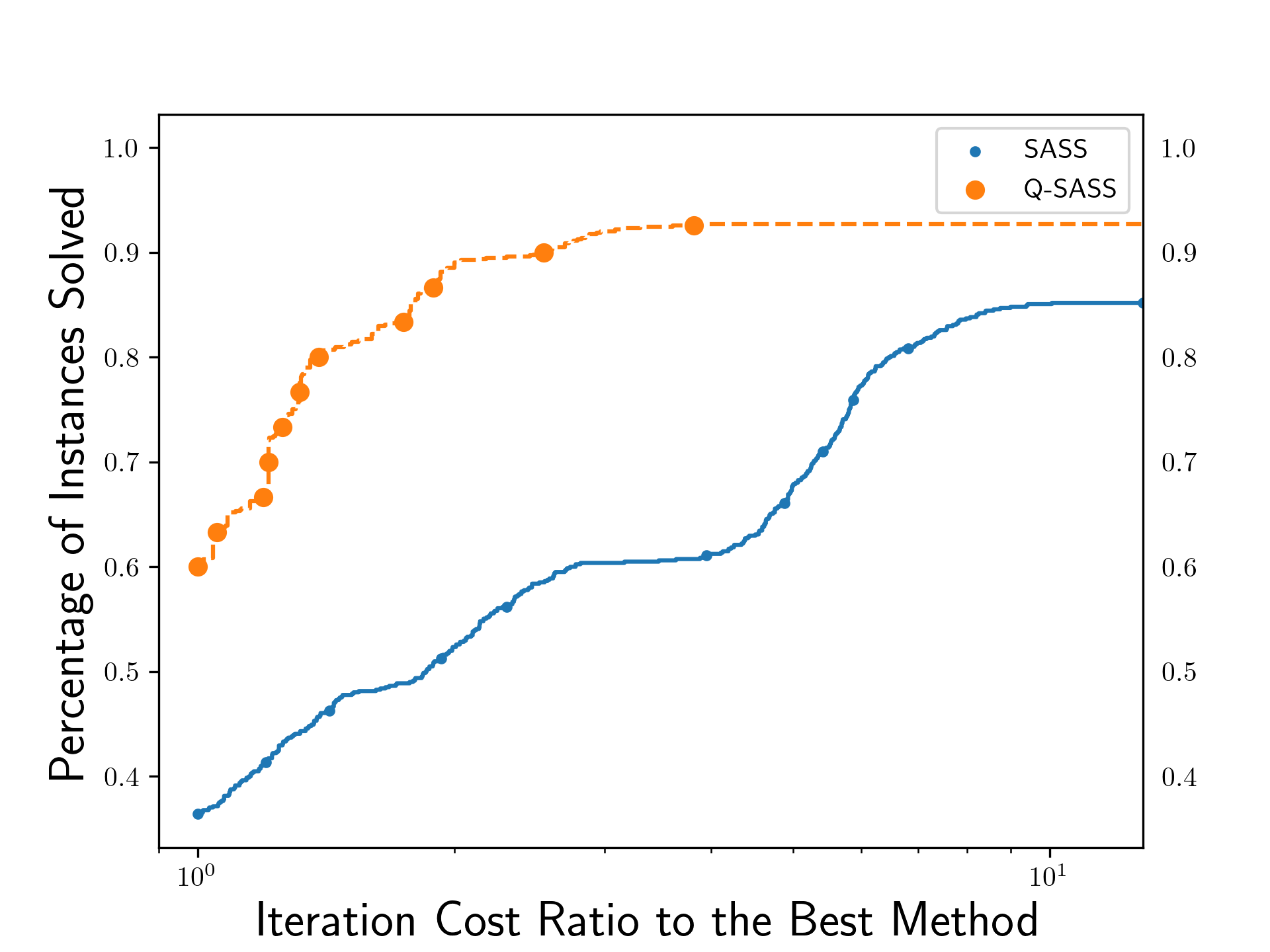}
	\end{subfigure}
	\hfill
	\begin{subfigure}{.49\textwidth}
	\centering
	\includegraphics[trim=3 -20 20 0, clip, width=\linewidth]{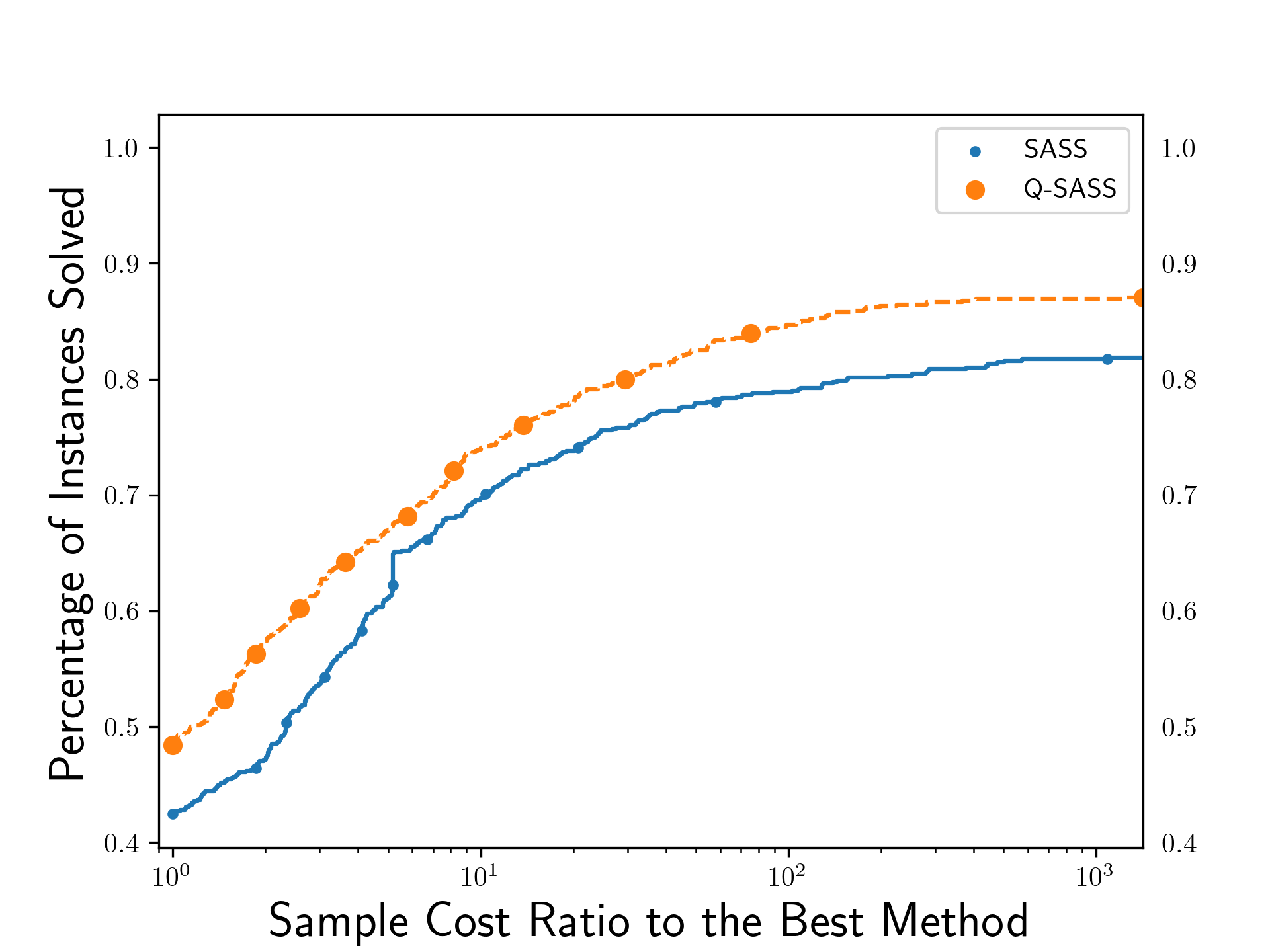}
\end{subfigure}
	\caption{Performance profiles for additive noise experiments \ml{on 810 problem instances (27 \texttt{CUTEst} problems, each with 30 random seeds).}
	The left plot uses the iteration count as the performance metric, while the right plot uses the observed sample count as the performance metric. }
	\label{fig:additive}
\end{figure}

\begin{figure}[h!]\centering
	\begin{subfigure}{.49\textwidth}
		\centering
		\includegraphics[trim=3 -20 20 0, clip, width=\linewidth]{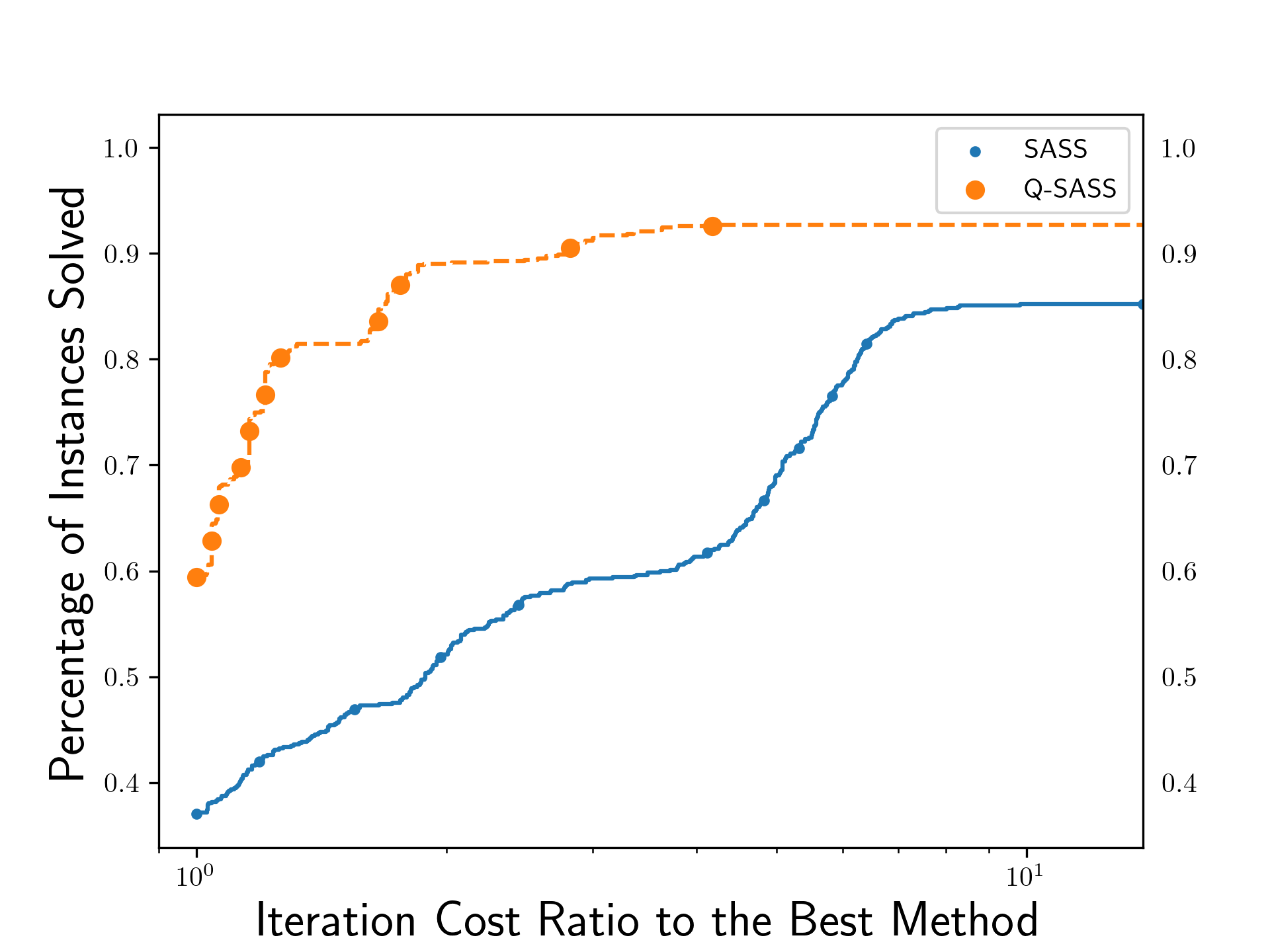}
	\end{subfigure}
	\hfill
	\begin{subfigure}{.49\textwidth}
		\centering
		\includegraphics[trim=3 -20 20 0, clip, width=\linewidth]{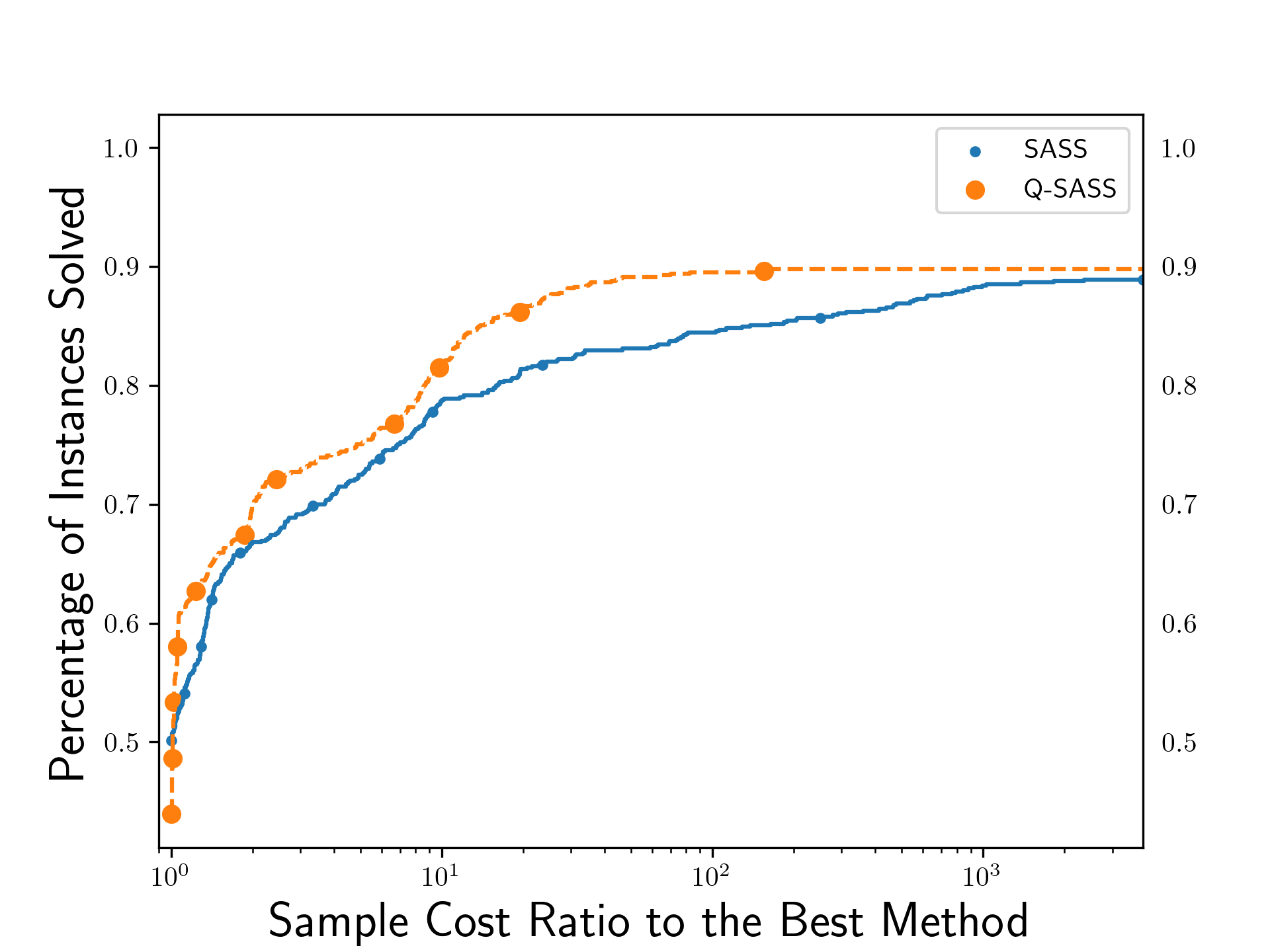}
	\end{subfigure}
	\caption{Performance profiles for multiplicative noise experiments\ml{ on 810 problem instances (27 \texttt{CUTEst} problems, each with 30 random seeds).}
	The left plot uses the iteration count as the performance metric, while the right plot uses the observed sample count as the performance metric. }
	\label{fig:multiplicative}
\end{figure}

\subsection{Advantage of bounding the eigenvalues of $B_k$}
As one may have noticed, there is no mechanism in \Cref{alg:enforce} that requires the memory parameter $M$ to be finite. 
In \Cref{fig:add_compare_bfgs} we compare Q-SASS from the previous set of experiments with a variant of Q-SASS that sets $M=\infty$, $\sigma_{\mathrm{ub}}^B=\infty$, and $\sigma_{\mathrm{lb}}^B=0$; in this case, \Cref{alg:enforce} never removes any information from the sets $S$ and $Y$, and the quasi-Newton updates are effectively just BFGS updates without bounding the eigenvalues. 
We refer to this variant of Q-SASS as Q-SASS-BFGS. 
We remark that the choices of $\sigma_{\mathrm{ub}}^B, \sigma_{\mathrm{lb}}^B$ in Q-SASS-BFGS are improper given the requirement of $\sigma_{\mathrm{lb}}^B>0$, and hence our convergence analysis in \Cref{sec:analysis} is inapplicable to Q-SASS-BFGS.
Nevertheless, in the same additive noise setting demonstrated in \Cref{fig:additive}, we see in \Cref{fig:add_compare_bfgs} that Q-SASS-BFGS is empirically a better method.

\begin{figure}[ht!]\centering
	\begin{subfigure}{.49\textwidth}
		\centering
		\includegraphics[trim=3 -20 20 0, clip, width=\linewidth]{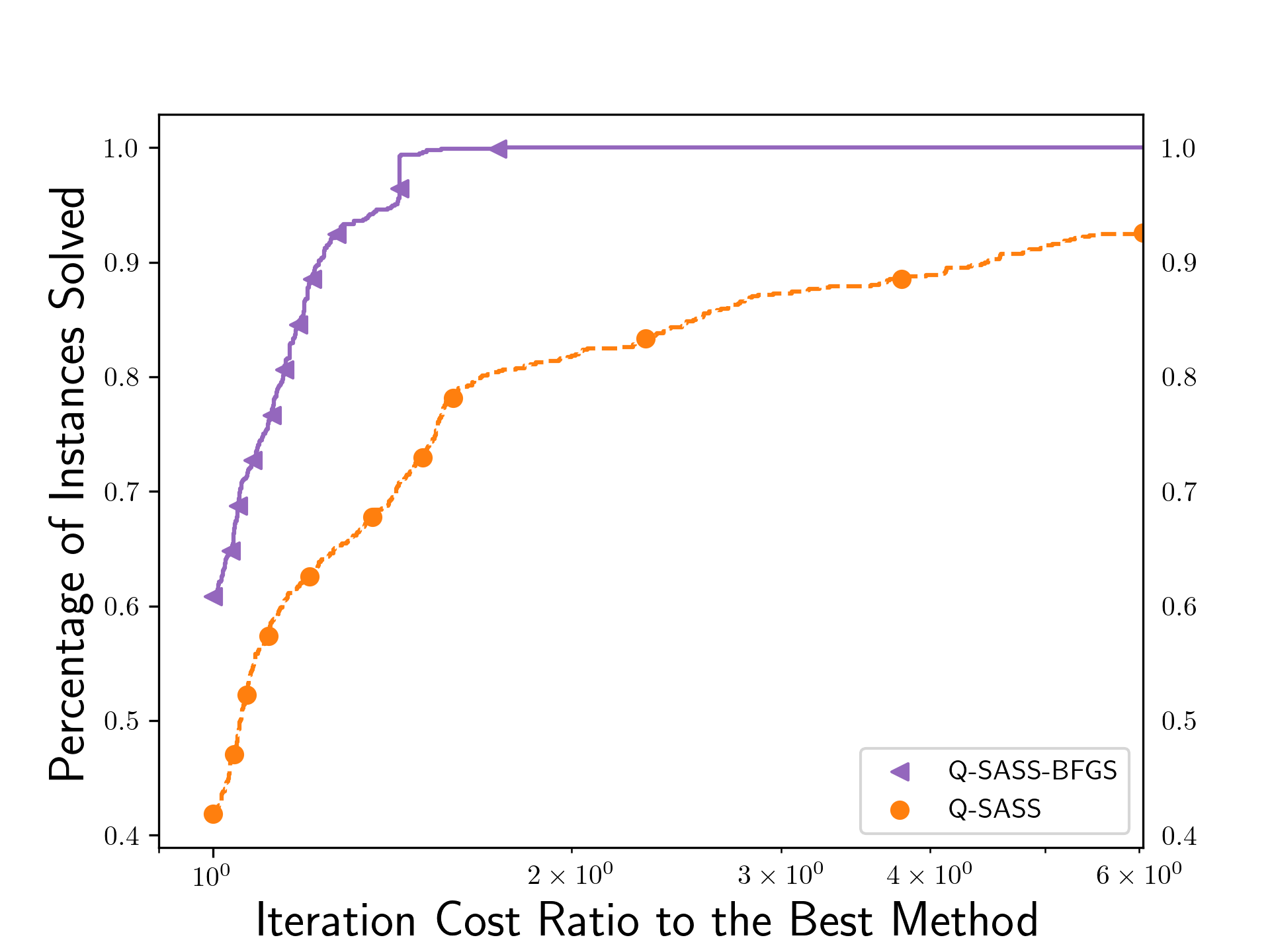}
	\end{subfigure}
	\hfill
	\begin{subfigure}{.49\textwidth}
		\centering
		\includegraphics[trim=3 -20 20 0, clip, width=\linewidth]{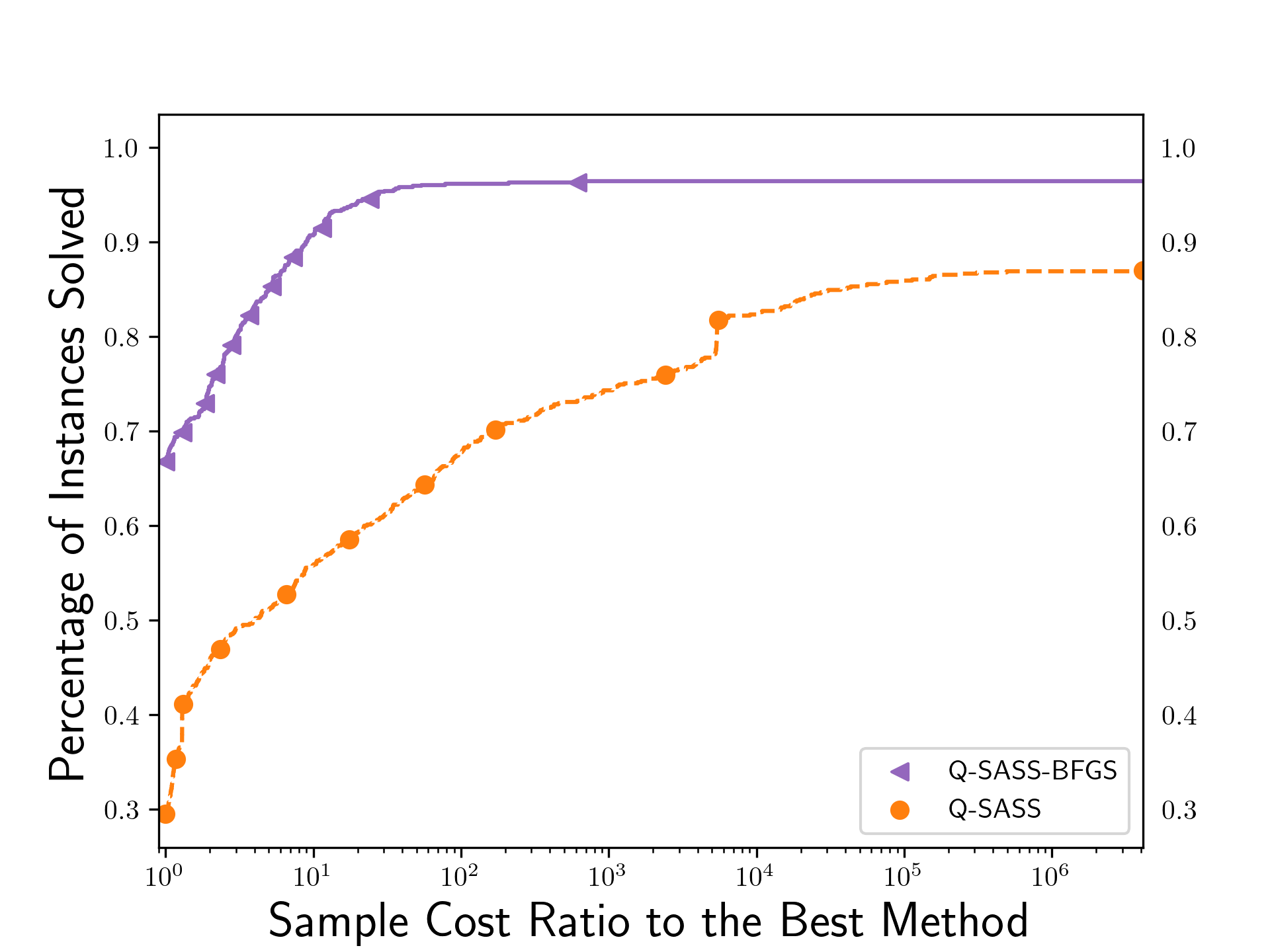}
	\end{subfigure}
        \caption{Performance profiles for additive noise experiments\ml{ on 810 problem instances (27 \texttt{CUTEst} problems, each with 30 random seeds).}
	The left plot uses the iteration count as the performance metric, while the right plot uses the observed sample count as the performance metric. }
	\label{fig:add_compare_bfgs}
	\begin{subfigure}{.49\textwidth}
		\centering
		\includegraphics[trim=3 -20 20 0, clip, width=\linewidth]{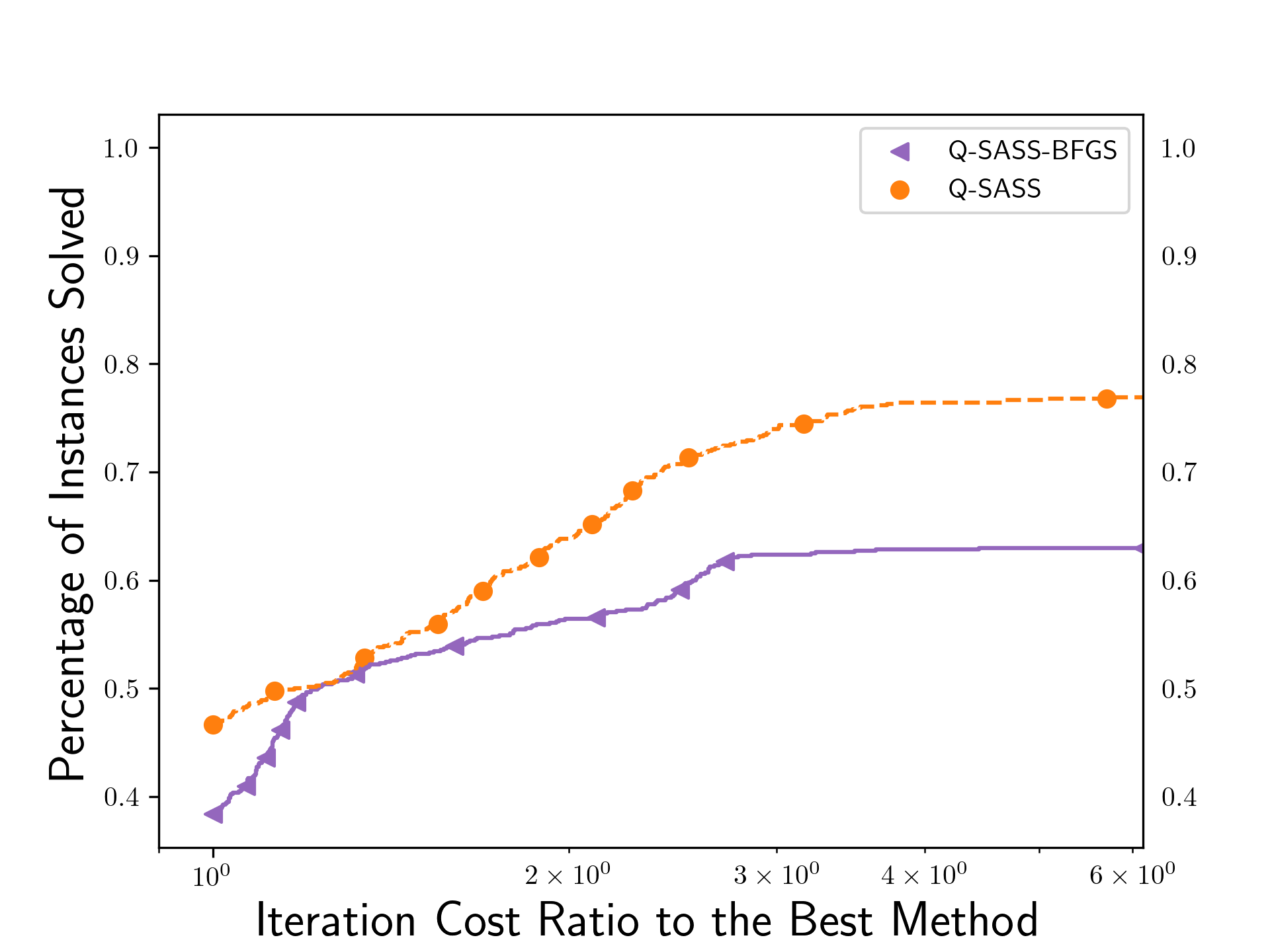}
	\end{subfigure}
	\hfill
	\begin{subfigure}{.49\textwidth}
		\centering
		\includegraphics[trim=3 -20 20 0, clip, width=\linewidth]{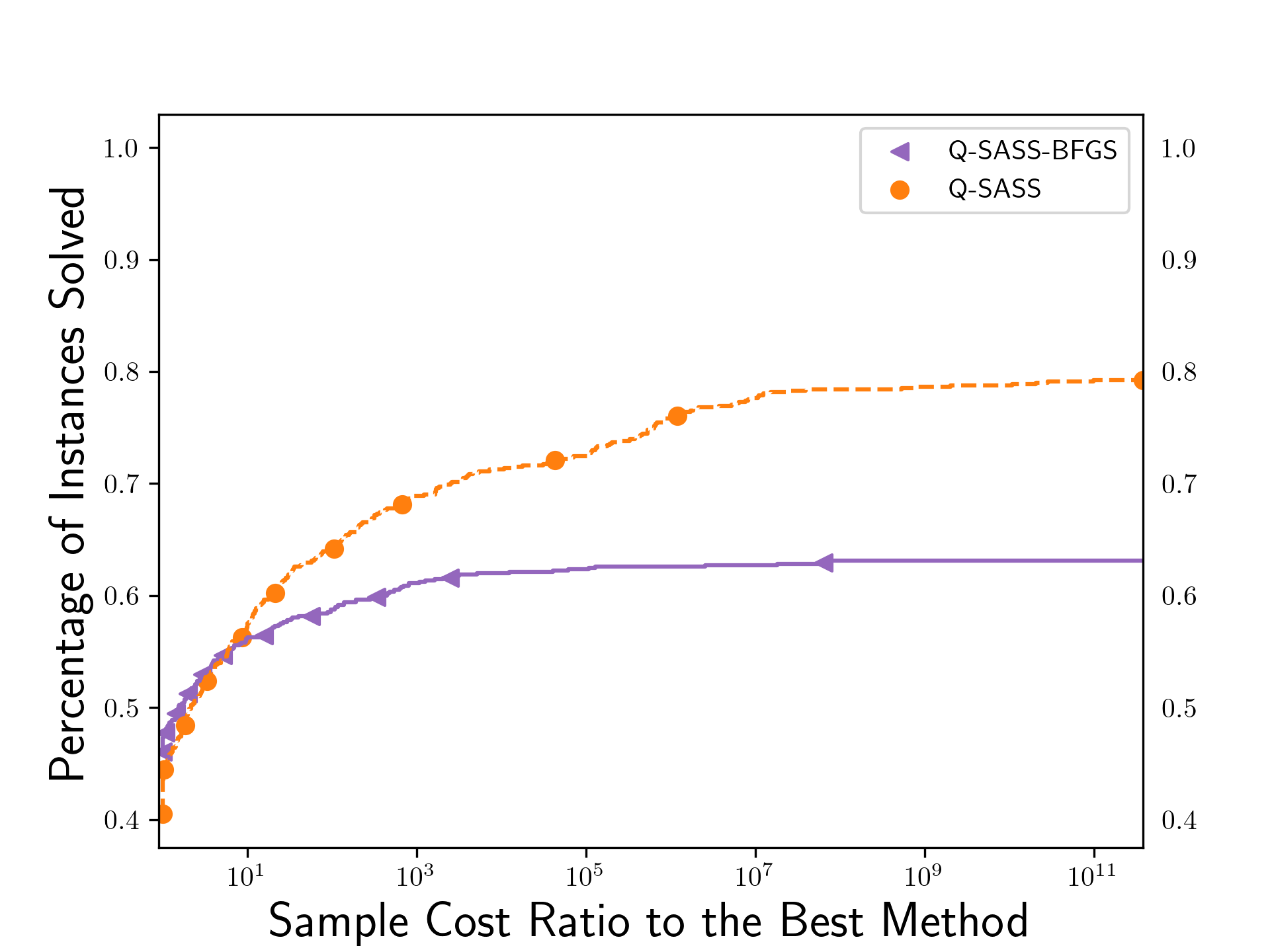}
	\end{subfigure}
	\caption{Performance profiles for mixed Gaussian noise experiments\ml{ on 810 problem instances (27 \texttt{CUTEst} problems, each with 30 random seeds).}
	The left plot uses the iteration count as the performance metric, while the right plot uses the observed sample count as the performance metric.  }
	\label{fig:mixed}
\end{figure}

One may hypothesize that the additive Gaussian noise setting is simply ``too well-behaved" in some sense, and so the gradient and the second-order estimates are never so poor as to completely ruin the quality of a search direction $d_k$. Thus, in such a well-behaved setting, there is no need to remove old curvature pairs. 
To compare with this setting, we tested a far more difficult noise setting, which we called the \emph{mixed Gaussian} setting. 
In this setting, we still generate estimates $f(x,\xi)$ of $\phi(x)$ according to \cref{eq:additive_noise}, 
but with the first-order oracle defined by
\begin{equation}
    g(x,\xi^\prime) = \begin{cases}
   \nabla\phi(x) + 10^{-6}\xi^\prime & \mbox{with probabilty } 0.8 \\
    \nabla\phi(x) + 10^{6}\xi^\prime & \mbox{with probabilty } 0.2.
    \end{cases}
    \label{eq:mixed_gaussian_noise}.
\end{equation}
The noise setting defined by \cref{eq:mixed_gaussian_noise} results in a reasonable gradient estimation in $80\%$ of the iterations but a terrible gradient estimate for the remaining $20\%$ of iterations.  
This is an exaggerated scenario modeling unreliable stochastic oracles, for instance, drift effects in a stochastic oracle, which frequently occur in current quantum computing \cite{proctor2020detecting}. 

In \Cref{fig:mixed}, we compare Q-SASS and Q-SASS-BFGS in the mixed Gaussian noise setting defined by \cref{eq:mixed_gaussian_noise}, with the stopping time for each \texttt{CUTEst} problem being $10^{-5}\|\nabla \phi(x_0)\|$, and the upper bound of the sample budget being $10^{23}$.
We observe that the performance of Q-SASS is more robust to the change in noise setting than is the performance of Q-SASS-BFGS.
By checking the number of times the eigenvalue bounds are violated in Q-SASS-BFGS under the additive noise and the mixed Gaussian noise setting, we see that under the additive setting, in general, only a very small percentage of the iterations would violate the bounds on the eigenvalues, while in the mixed Gaussian setting, a relatively large proportion of the iterations violate the eigenvalue bounds. For example,  for the COSINE problem, in the additive noise setting, every iteration of Q-SASS-BFGS satisfies the eigenvalue bounds, while in the mixed Gaussian setting, every run sees over $95\%$ of the iterations violating the eigenvalue bounds.

We conclude from these tests that a practical choice between Q-SASS and Q-SASS-BFGS should depend on the specific noise setting, but in general, Q-SASS may be a more robust choice when noise is particularly adverse since bounding the eigenvalues of $B_k$ has an effect of denoising $d_k$ and can make $d_k$ more robust against the noise of the gradient estimation $g_k$. Note that the choices of the eigenvalue bounds are hyper-parameters one can choose based on the specifics of the problem; at lower noise levels, more relaxed bounds may be chosen, whereas at higher noise levels, selecting a more stringent bound is beneficial. 


\subsection{Effects of parameters $\kappa$ and $\mu$}
We remind the reader that in \Cref{sec:synthetic} we defined two parameters $\kappa$ and $\mu$, which directly control the accuracy of the zeroth- and first-order oracles
with the defaults in Q-SASS set as $1$ and $0.01$, respectively. 
We observed in initial experiments that the effect of varying $\mu$ was relatively inconsequential compared with the effect of varying $\kappa$, so we chose to show several values of $\kappa$ for a fixed value of $\mu$ in \Cref{fig:search}. 
\Cref{fig:search} shows the performance of Q-SASS in the additive noise setting while varying the $\kappa$ values.
We again use performance profiles but augment them with data profiles \cite{more2009benchmarking} to illustrate more fully the difference it makes in the Q-SASS solver. 

From the definition of $\kappa$ (recall \cref{eq:first_order}), smaller values of $\kappa$ imply more accurate gradient estimates.
This amounts to a trade-off in that small values of $\kappa$ should yield a stochastic process that more closely tracks a deterministic QN method, thereby likely yielding lower iteration counts, but at the expense of a greater sample count in order to achieve the accuracy required by \cref{eq:first_order}.  
Our takeaway from the experiments summarized in \Cref{fig:search} is that the parameter $\kappa$ should be viewed as our best practical control for managing this trade-off depending on a user's needs.
This parameter is especially relevant to the trade-off in latency time versus shot acquisition in hybrid quantum-classical computing that was studied in \cite{menickelly2022latency}. 

\begin{figure}[h!]\centering
	\begin{subfigure}{.49\textwidth}
		\centering
		\includegraphics[trim=3 0 20 0, clip, width=\linewidth]{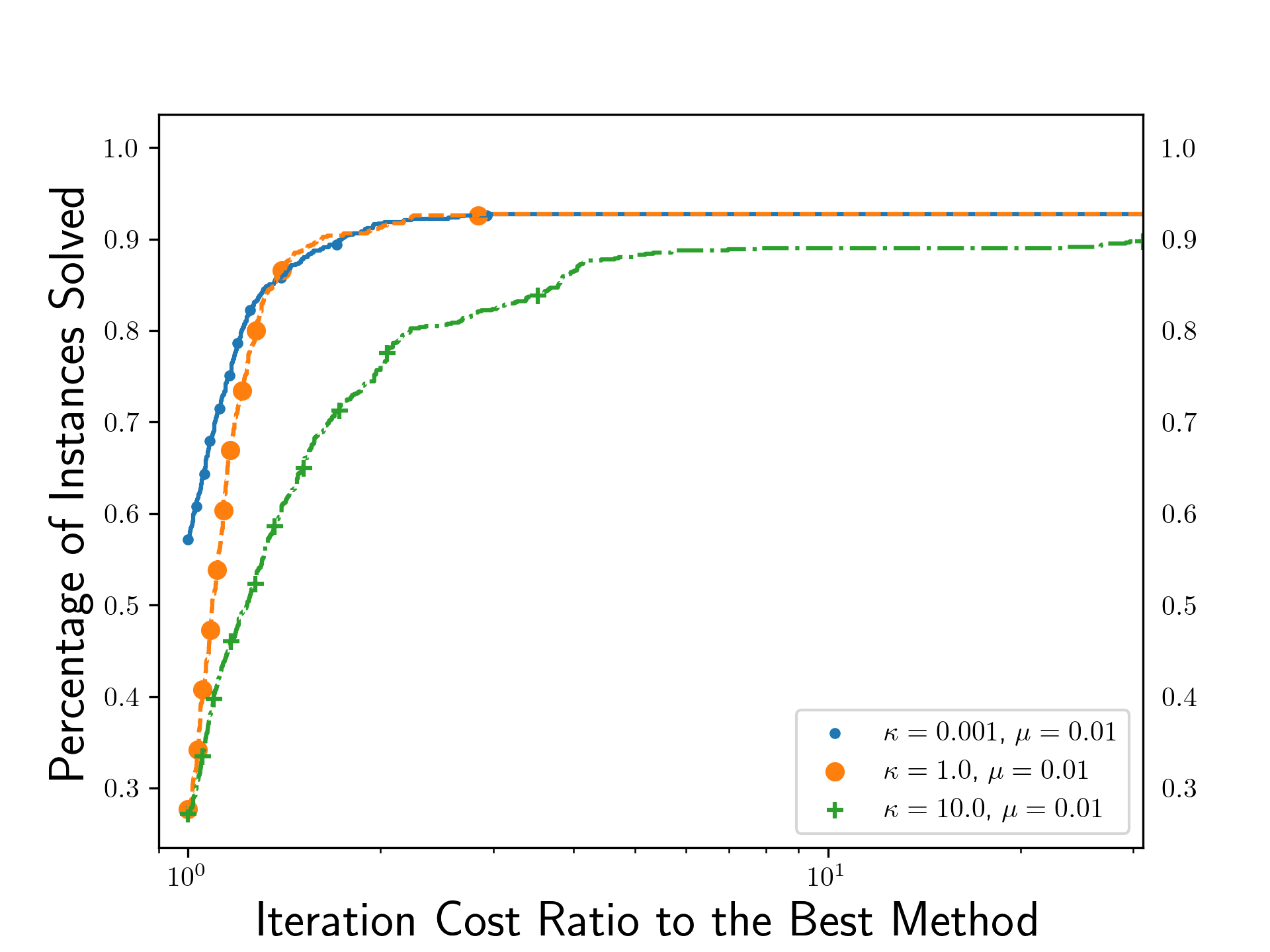}
	\end{subfigure}
	\hfill
	\begin{subfigure}{.49\textwidth}
		\centering
		\includegraphics[trim=3 0 20 0, clip, width=\linewidth]{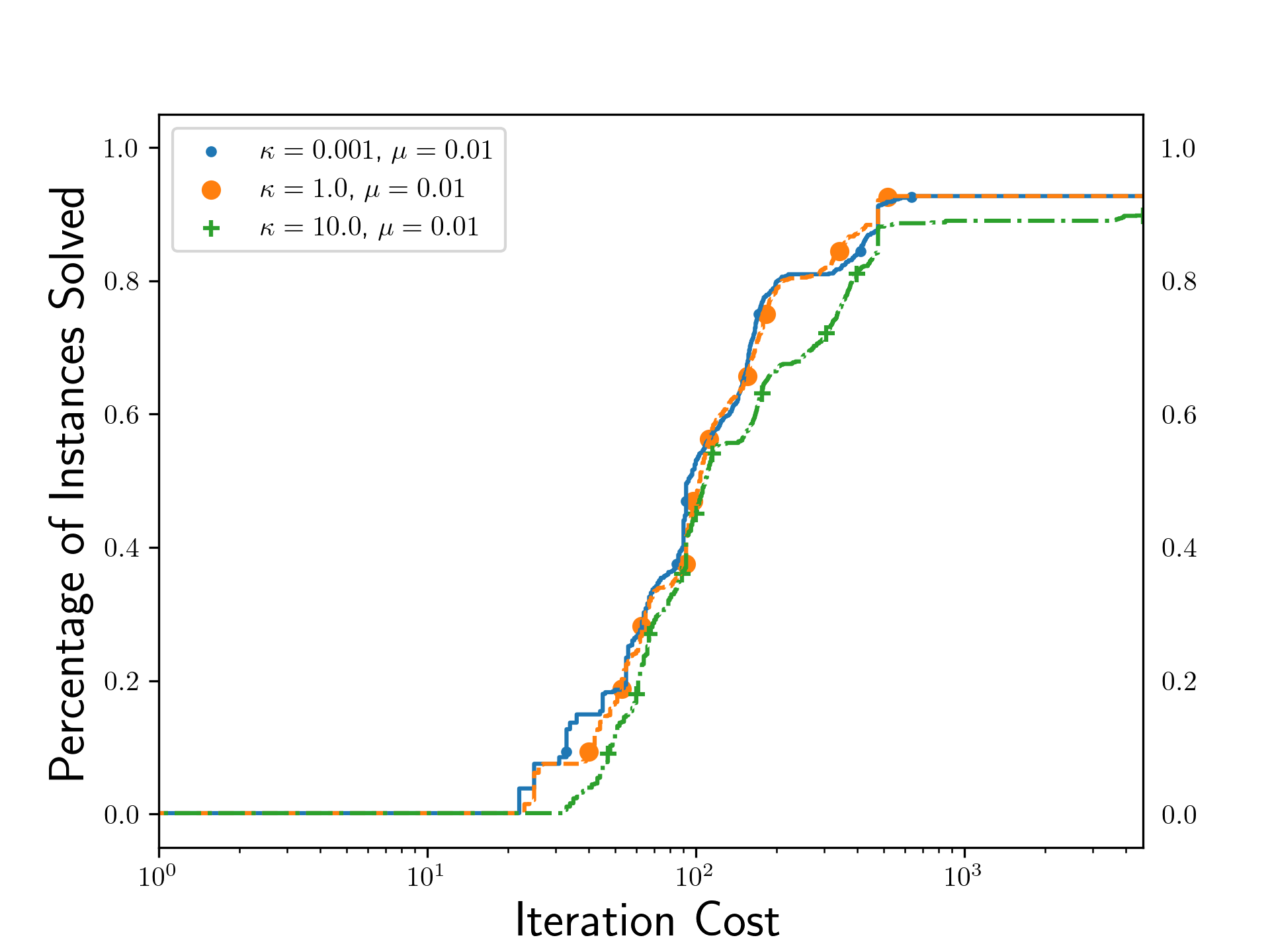}
	\end{subfigure}
	\begin{subfigure}{.49\textwidth}
		\centering
		\includegraphics[trim=3 -20 20 0, clip, width=\linewidth]{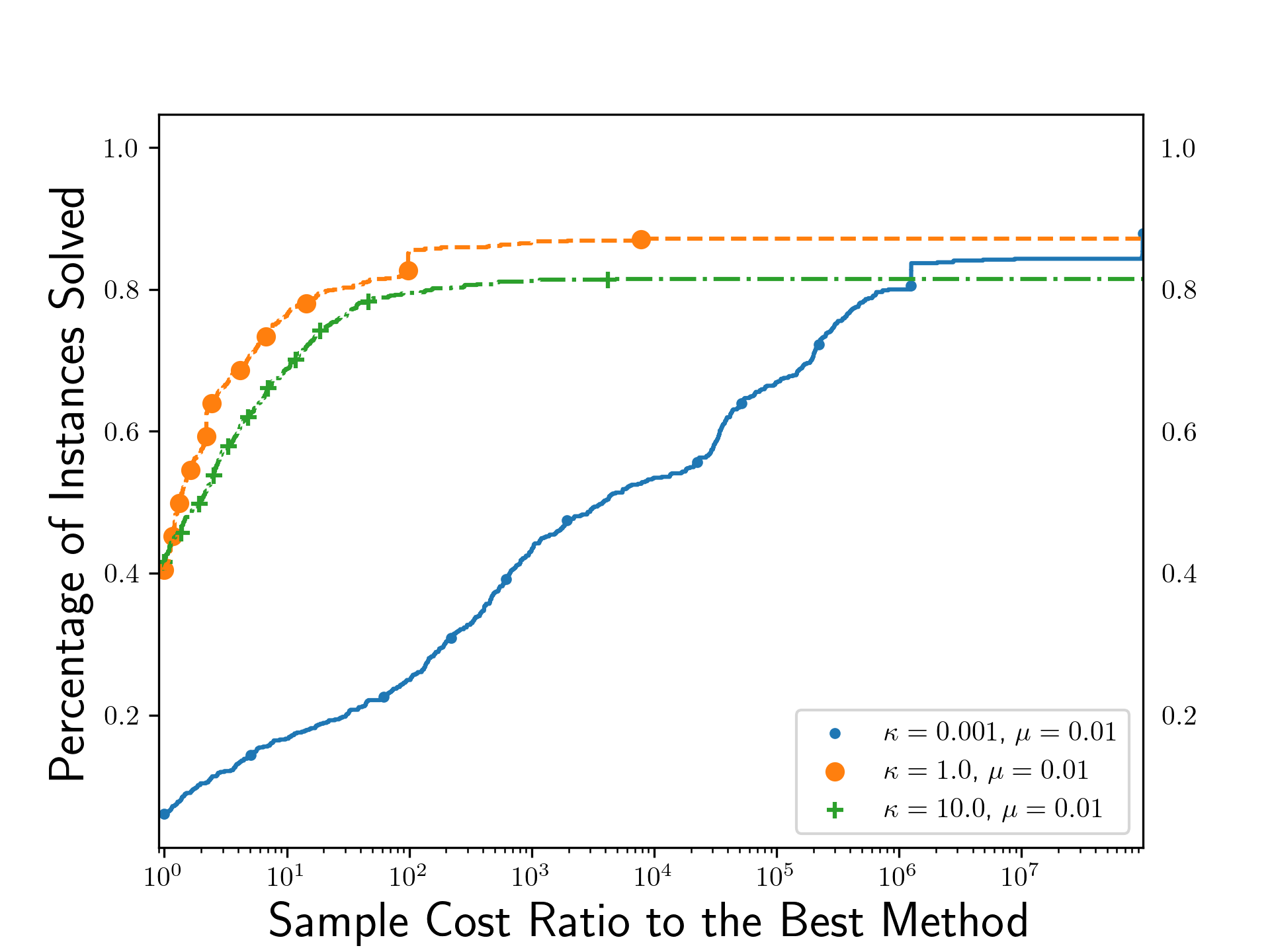}
	\end{subfigure}
	\begin{subfigure}{.49\textwidth}
		\centering
		\includegraphics[trim=3 -20 20 0, clip, width=\linewidth]{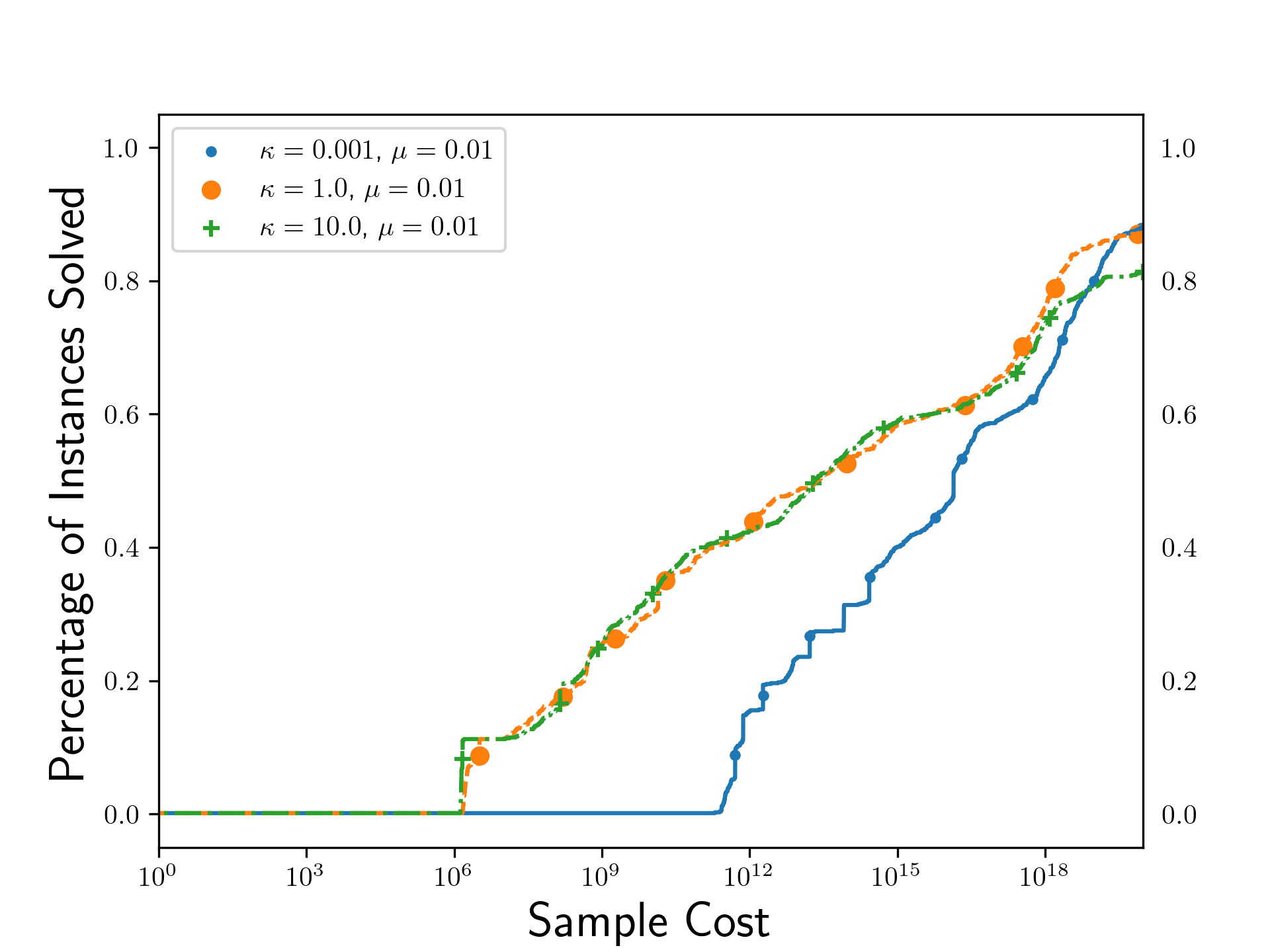}
	\end{subfigure}
	\caption{Profiles in the additive noise setting\ml{ on 810 problem instances (27 \texttt{CUTEst} problems, each with 30 random seeds) }using different values of $\kappa$. The left two plots are performance profiles, and the right two plots are data performance profiles. The top two plots use iteration count as the metric, and the bottom two use sample cost as the metric. }
	\label{fig:search}
\end{figure}

\subsection{VQE problems} \label{vqe_sec}
Given our previously discussed motivations in quantum computing, we employ the same VQE test problems as studied in \cite{menickelly2022latency}.
We defer detailed discussion of these optimization problems to that paper, but we note that we are using the H$_2$ and LiH problems from \cite{menickelly2022latency} with ansatz depths that result in $n=3$- and $n=16$-dimensional optimization problems, respectively. 
Motivated by the aforementioned issue of chemical accuracy, we define our stopping times as reaching a point that has a function value within $10^{-3}$ of the true minimum, and we set $\varepsilon_f = 0.0001$ and $\kappa =0.5$.

The zeroth-order oracle is as described in \Cref{sec:motivation}.
The first-order oracle is obtained by substituting the exact function values in \cref{eq:magic_dd} with independent outputs from the zeroth-order oracle. 
The per-iteration sample budgets for the zeroth- and first-order oracles still follow \cref{eq:nfk_ngk}. 
As seen in \cref{eq:magic_dd}, gradient approximations are necessarily performed coordinate-wise, and so we may use coordinate-wise variance estimates of the partial derivatives to distribute the gradient sample budget $N_{g,k}$ optimally in order to minimize the variance of the full gradient estimate $\mathbb{V}(g\left(x_k,\Xi^{\prime}\right))$. 
In particular, we solve 
\begin{equation}\label{opt_prog1}
	\begin{array}{ll}
		\mbox{minimize}_w \quad & \sum_{i=1}^n  \frac{\var_i}{w_i} \\
		\mbox{subject to} & \sum_{i=1}^n w_i=1 \\
		& w\geq 0,
	\end{array}
\end{equation}
where $\var_i$ denotes the sample variance of the $i$th partial derivative estimate
and
the decision variables $w_i$ represent the percentages of the gradient sample budget $(N_{g,k})$ allocated to the $i$th partial derivative estimate.
One can easily verify that the solution to the convex program
\cref{opt_prog1} suggests that the optimal way to distribute the sample budget $N_{g,k}$ is to distribute samples proportionally to the standard deviations of the partial derivatives.

For each optimization problem in the VQE setting, we generate $30$ instances by fixing a unique random seed at the beginning of the optimization run. 
For these tests,
the maximum number of allowable iterations is $500$, and the total sample budget for each run is set to $10^{10}$.
The results,  shown in performance profiles in \Cref{fig:quantum}, 
clearly demonstrate a preference for using Q-SASS over SASS in terms of both iteration and sample costs. 

\begin{figure}[h!]\centering
	\begin{subfigure}{.49\textwidth}
		\centering
		\includegraphics[trim=3 -20 20 0, clip, width=\linewidth]{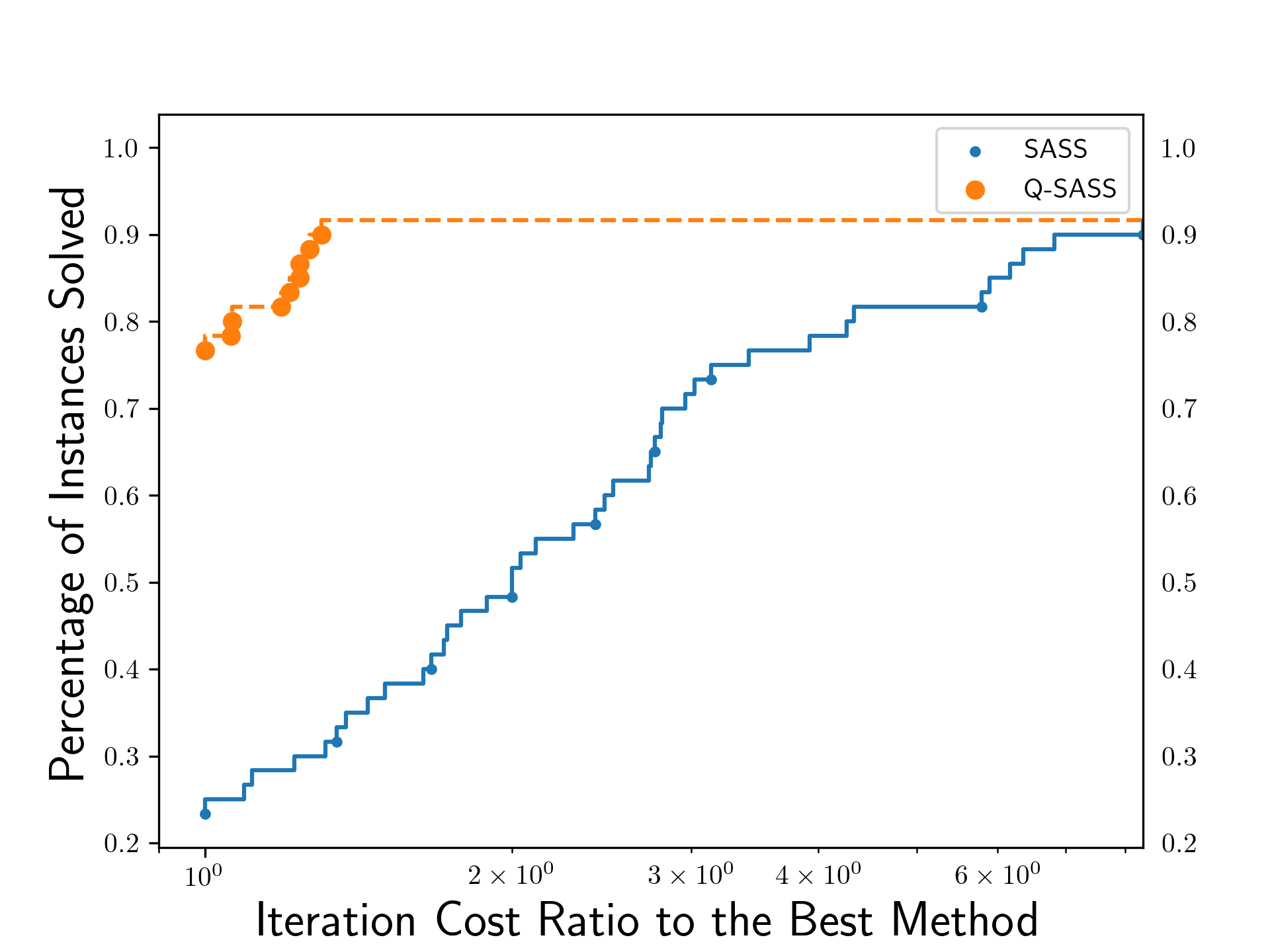}
	\end{subfigure}
	\hfill
	\begin{subfigure}{.49\textwidth}
		\centering
		\includegraphics[trim=3 -20 20 0, clip, width=\linewidth]{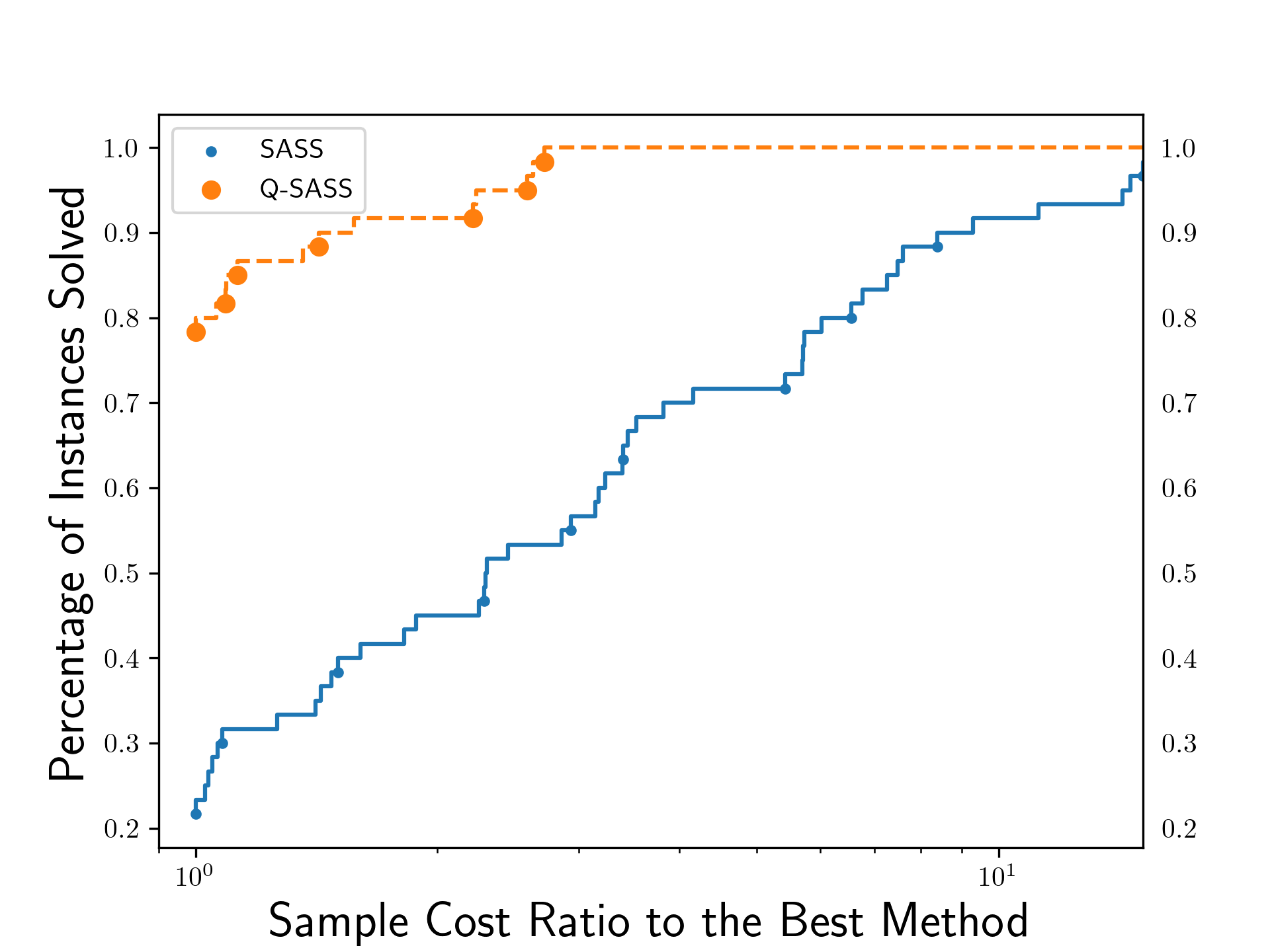}
	\end{subfigure}
	\caption{Performance profiles for VQE problems.
	The left plot uses the iteration count as the performance metric, while the right plot uses the observed sample count as the performance metric.}
	\label{fig:quantum}
\end{figure}

\subsection{Finite-difference derivative-free optimization}

We also experiment with Q-SASS in the derivative-free setting on the same \texttt{CUTEst} problems as in \Cref{sec:synthetic}; that is, we assume that no first-order oracle is directly available and must be estimated via repeated queries of the zeroth-order oracle.
In particular, the zeroth-order oracle will still be defined as it was in \cref{eq:additive_noise} for additive noise, but the first-order oracle will now only be approximated via a forward finite-difference gradient estimate. 
 Specifically, given a difference parameter $h>0$, the $i$th partial derivative is estimated via
\begin{equation}
\label{eq:finitediff}
g(x;\xi^\prime)_i=  \frac{f(x+h e_i;\xi^\prime_i)-f(x;\xi^\prime_0)}{h},
\end{equation}
where $\xi^\prime = \{\xi^\prime_0,\xi^\prime_1, \cdots, \xi^\prime_n\}\in \R\times \cdots \times\R$ 
and $e_i$ is the $i$th elementary basis vector.
Let $n$ be the dimension of the problem, and let $N_{g,k}$ be the total sample budget for the gradient estimation. Following similar reasoning as in \Cref{vqe_sec}, we distribute the gradient sample budget $N_{g,k}$ at iteration $k$ by the following logic.
We first allocate $s_0=\frac{N_{g,k}}{n+1}$ many samples to estimate the function value at $x_k$ 
Then, for each $i=1,\dots,n$ in \cref{eq:finitediff} the number of samples $s_i$ used to estimate $\phi(x+h e_i)$ is allocated again according to the estimated standard deviation of the $i$th partial derivative, using the remaining sample budget. Specifically, $s_i=w_i\frac{nN_{g,k}}{n+1}$, with the $w_i$'s being the optimal solution of \eqref{opt_prog1}.

Inspired by work in \cite{berahas2022theoretical} and \cite{more2012estimating},
we choose the difference parameter $h$  according to the formula
$$h = 2\sqrt{\frac{e_{\mathrm{std}}}{\bar L}} + 10^{-8},$$
where $e_{\mathrm{std}}$ is an estimate of $\left(\frac{\var f(x,\Xi^\prime_0)}{s_0}\right)^{1/2}$ (i.e., an estimate of the standard deviation of the function value estimation) and $\bar L$ is an estimate of the gradient Lipschitz constant. 
In the experiment, we set $\bar L = \|\nabla^2 \phi(x_0)\|$.
The purpose of the term $10^{-8}$ is to guarantee that $h$ is at least the square root of the typical roundoff error from double-precision floating-point arithmetic. 
Using \cref{eq:nfk_ngk}, plugging in the value of $h$ into the gradient formula, and then rearranging the terms, we obtain the number of samples needed for the gradient estimation at iteration $k$:
$$N_{g,k}
\geq 
\frac{L^2\left(\sum_{i=1}^n\var f(x,\Xi^\prime_0)+\var f(x+he_i,\Xi^\prime_i)\right)^2}{16(n+1)\,\var f(x,\Xi^\prime_0)\cdot\delta^2\varepsilon_{g,k}^4 }.$$
Since all the function estimations have the same constant variance in the additive noise setting, the above bound for gradient sample size can be simplified to
$$N_{g,k}
\geq 
\frac{L^2n^2\,\var f(x,\Xi^\prime_0)}
{4(n+1)\cdot\delta^2\varepsilon_{g,k}^4 }.$$
Clearly, in this setting, the optimal way to distribute the gradient sample budget is to distribute it evenly across the coordinates.

We used the same parameter settings as in the experiments in \Cref{sec:synthetic}, with the total iteration budget being $300$ and the total sample budget being $10^{23}$. 
The results of these experiments are shown in \Cref{fig:FD}. 
We again observe a preference, in terms of iterations and samples, for Q-SASS over SASS in this setting. 

\begin{figure}[h!]\centering
	\begin{subfigure}{.49\textwidth}
		\centering
		\includegraphics[trim=3 -20 20 0, clip, width=\linewidth]{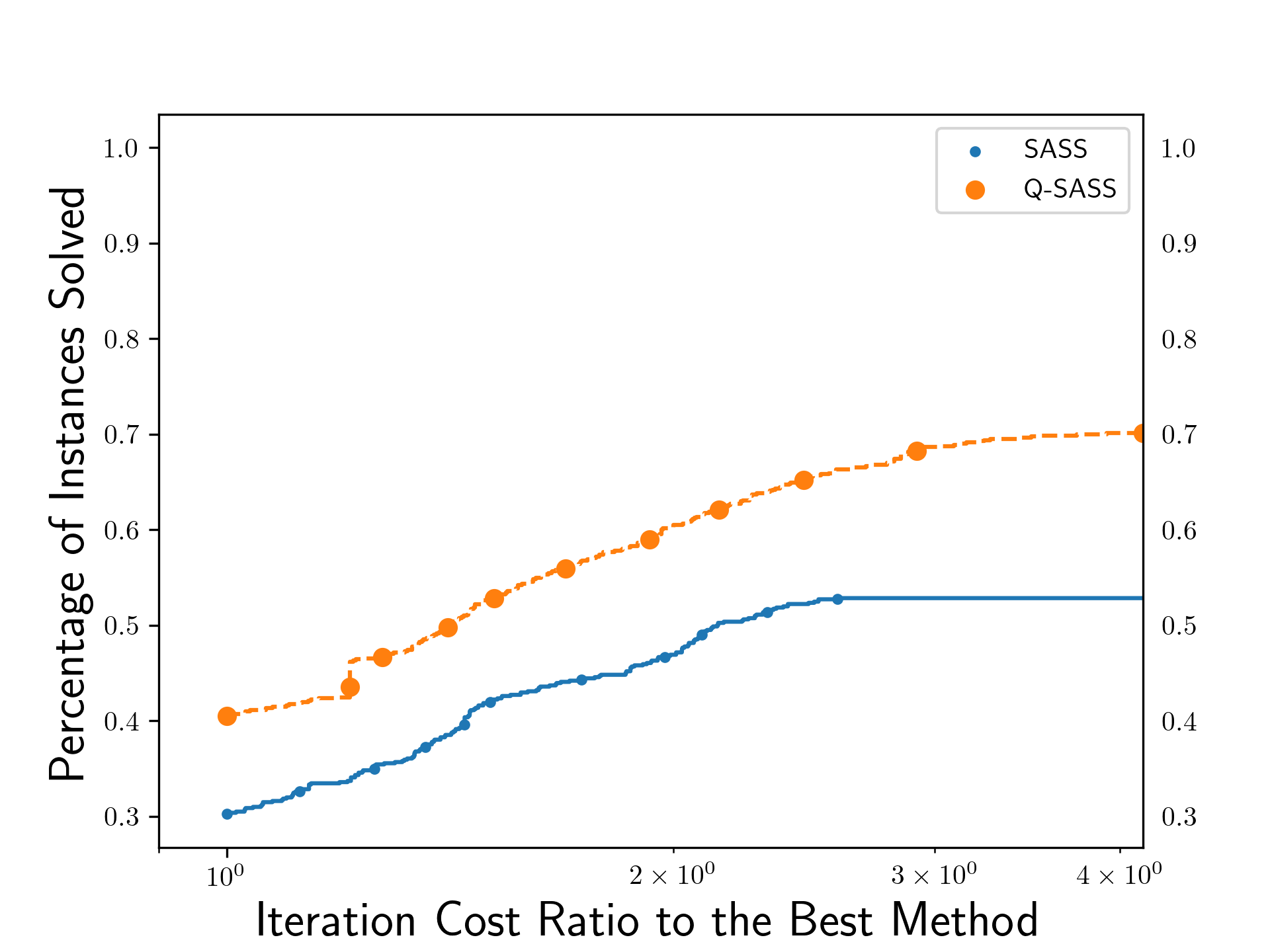}
	\end{subfigure}
	\hfill
	\begin{subfigure}{.49\textwidth}
		\centering
		\includegraphics[trim=3 -20 20 0, clip, width=\linewidth]{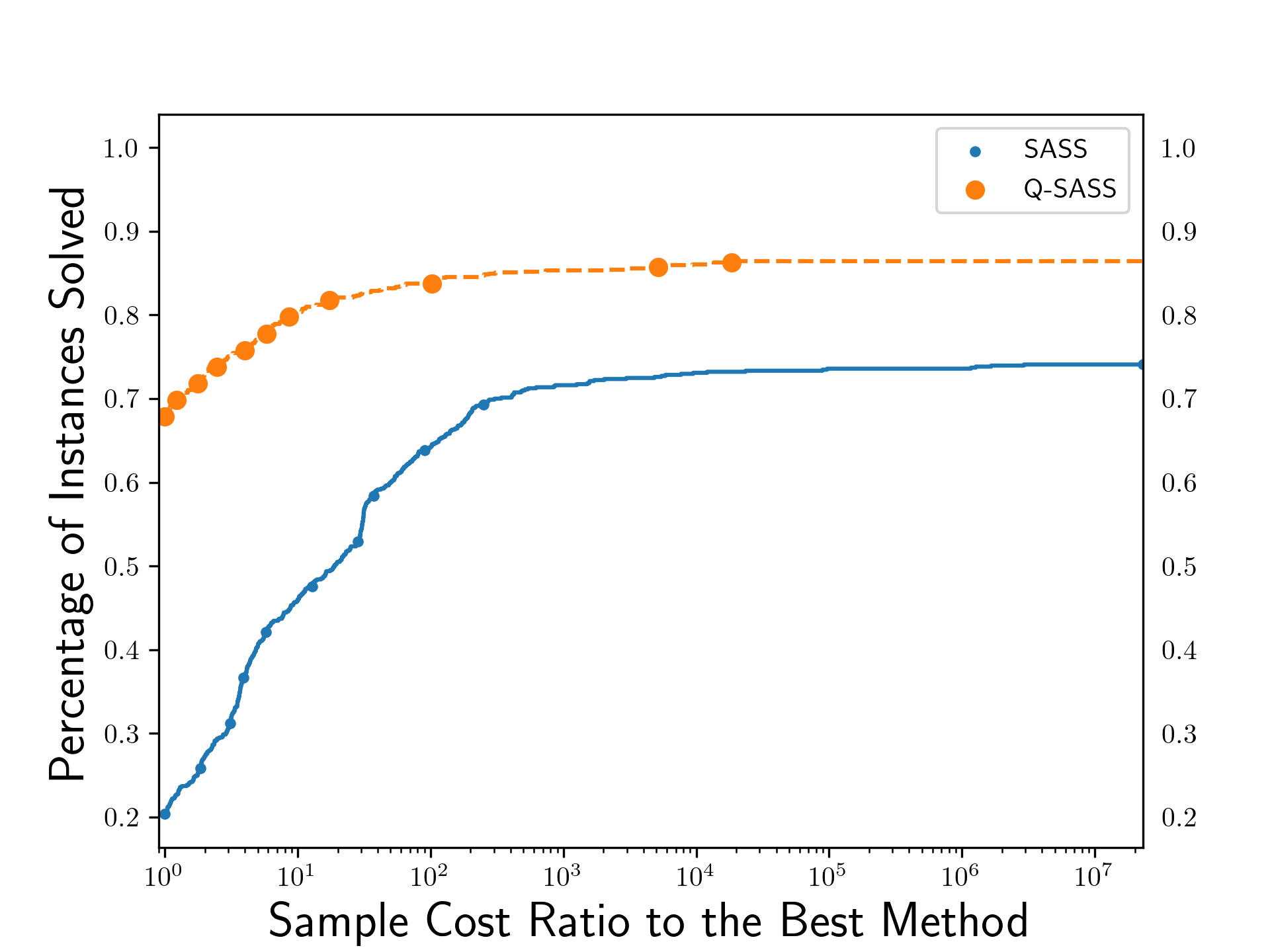}
	\end{subfigure}
	\caption{Performance profiles for finite difference experiments\ml{ on 810 problem instances (27 \texttt{CUTEst} problems, each with 30 random seeds).}
	The left plot uses the iteration count as the performance metric, while the right plot uses the observed sample count as the performance metric.}
	\label{fig:FD}
\end{figure}

\section{Conclusion} \label{sec: conclusion}
In this paper we designed, analyzed, and tested a stochastic quasi-Newton method without the advantage of exploiting common random numbers. 
Function information is  available only through possibly biased probabilistic zeroth- and first-order oracles, which can have unbounded errors. 
For nonconvex functions, we demonstrated that the algorithm achieves an $\varepsilon$-stationary point in $O(\frac{1}{\varepsilon^2})$ iterations with overwhelmingly high probability.
For strongly convex functions, the algorithm converges linearly to an $\varepsilon$-optimal point with overwhelmingly high probability. 
The advantage of the algorithm is confirmed by the empirical experiments.

\subsection*{Acknowledgments}
	This material was based upon work supported by the U.S.\ Department of
	Energy, Office of Science, Office of Advanced Scientific Computing
	Research, applied mathematics and ARQC programs under Contract Nos.\
    DE-AC02-05CH11231 and DE-AC02-06CH11357. Miaolan Xie was partially  supported by NSF TRIPODS grant 17-40796.
\bibliographystyle{siamplain}
\bibliography{refs}

\appendix

\section{Proof of \Cref{prop:ass_nonconvex}}
\label{sec:propncproof}
\begin{proof}
The proof of this result follows the same reasoning as the proof of Proposition 1 in \cite{jin2021high}. 
In fact, the proofs of the first two claims we need to prove in \Cref{ass:alg_behave} are the same as in that work, and we refer the reader to \cite{jin2021high} directly. 
We continue by proving the remaining three claims for our setting. 
 
\begin{itemize}		
		\item[3.] 
		Since iteration $k$ is true, we have that $\norm{G_k - \nabla \phi(X_k)} \leq \max\{\varepsilon_g,  \min\{\tau, \kappa A_k\} \norm{G_k}\}$.
		We consider two cases:
		\begin{enumerate}
			\item Suppose $\norm{G_k - \nabla \phi(X_k)} \leq \min\{\tau, \kappa A_k\} \norm{G_k}$. 
			By the triangle inequality,  
			$$\norm{G_k} \geq \frac{1}{1+\min\{\tau, \kappa A_k\} } \norm{\nabla\phi(X_k)}\geq \frac{1}{1+\tau} \norm{\nabla\phi(X_k)}.$$ 
			Together with the fact that iteration $k$ is successful and \Cref{prop:innerproduct}, we obtain 
			$$f(X_{k+1}) - f(X_k) \leq -  A_k\theta\dotp{D_k,G_k} + 2\varepsilon_f \leq  -  A_k\theta\sigma_{\mathrm{lb}}\norm{G_k}^2 + 2\varepsilon_f
			\leq- \frac{ A_k\theta\sigma_{\mathrm{lb}}\norm{\nabla\phi(X_k)}^2}{(1+\tau )^2} + 2\varepsilon_f.$$
			\item Suppose $\norm{G_k - \nabla \phi(X_k)} \leq \varepsilon_g$. Since $k < T_\varepsilon$, we have $\norm{\nabla \phi(X_k)} > \varepsilon\geq \frac{\varepsilon_g}{\eta}$. This implies that $\norm{G_k - \nabla \phi(X_k)} \leq \eta\norm{\nabla\phi(X_k)}$. 
			Rearranging via the triangle inequality, we have
			$\norm{G_k} \geq (1-\eta)\norm{\nabla\phi(X_k)}.$
			Combined with the fact that iteration $k$ is successful, 
			\begin{equation*}
			\begin{array}{ll}
			f(X_{k+1}) - f(X_k) & \leq -  A_k\theta\dotp{D_k,G_k} + 2\varepsilon_f
			\leq 
			-  A_k\theta\sigma_{\mathrm{lb}}\norm{G_k}^2 + 2\varepsilon_f\\
			 & \leq  -  A_k\theta\sigma_{\mathrm{lb}}(1-\eta)^2\norm{\nabla\phi(X_k)}^2 + 2\varepsilon_f.
			\end{array}
			\end{equation*}
		\end{enumerate}
		Combining the above two cases, we have that on any true, successful iteration $k < T_\varepsilon$, 
		$$f(X_{k+1}) - f(X_k) \leq -\min\left\{\frac{1}{(1+\tau )^2}, \, (1-\eta)^2\right\} A_k\theta\sigma_{\mathrm{lb}}\norm{\nabla\phi(X_k)}^2 + 2\varepsilon_f.$$
		Because $k < T_\varepsilon$, we have that $\norm{\nabla\phi(X_k)} > \varepsilon$, and so
		$$f(X_{k+1}) - f(X_k) \leq -h( A_k) + 2\varepsilon_f.$$ 
		Finally,  because $E_k+E_k^+\leq 2\varepsilon_f$ on true iterations, we have that
		$$\phi(X_{k+1}) - \phi(X_k) \leq -h( A_k) + 4\varepsilon_f.$$
		Recalling that $Z_k = \phi(X_k) - \phi^*$, we conclude that $Z_{k+1} - Z_k = \phi(X_{k+1}) - \phi(X_k)$.
		This completes the proof. 
		
		\item[4.] We first show that if $ A_k \leq \bar{\alpha}$ and $I_k = 1$, then 
		\begin{equation}
			\label{eq:ass1iinoiseless}
			\phi(X_k -  A_kD_k) \leq \phi(X_k) -  A_k\theta\dotp{D_k,G_k}.
		\end{equation} 
		Since $I_k=1$ (i.e., the $k$th iteration is true),  
		$\norm{G_k - \nabla \phi(X_k)} \leq \max\{ \min\{\tau, \kappa A_k\} \norm{G_k}, \varepsilon_g\}$. 
		Similar to the proof of the third claim, we consider two cases:
		\begin{enumerate}
			\item Suppose $\norm{G_k - \nabla \phi(X_k)} \leq  \min\{\tau, \kappa A_k\} \norm{G_k}$. 
			By \Cref{ass:Lipschitz},
			$$
			\phi\left(X_{k+1}\right) \leq \phi\left(X_{k}\right)+ A_{k} D_{k}^{T} \nabla \phi\left(X_{k}\right)+\frac{L}{2}\left\| A_{k} D_{k}\right\|^{2} .
			$$
			Thus by the Cauchy--Schwarz inequality and \Cref{prop:innerproduct}, we obtain for every true iteration
			$$
			\begin{aligned}
				\phi\left(X_{k}- A_{k} D_{k}\right) & \leq \phi\left(X_{k}\right)- A_{k} D_{k}^{T} \nabla \phi\left(X_{k}\right)+\frac{ A_{k}^{2} L}{2}\left\|D_{k}\right\|^{2} \\
				&=\phi\left(X_{k}\right)- A_{k} D_{k}^{T}\left(\nabla \phi\left(X_{k}\right)-G_{k}\right)- A_{k} D_{k}^{T} G_{k}+\frac{ A_{k}^{2} L}{2}\left\|D_{k}\right\|^{2} \\
				&\leq \phi\left(X_{k}\right)+ A_{k}\left\|D_{k}\right\|\left\|\nabla \phi\left(X_{k}\right)-G_{k}\right\|- A_{k} D_{k}^{T} G_{k}+\frac{ A_{k}^{2} L}{2}\left\|D_{k}\right\|^{2} \\
				& \leq \phi\left(X_{k}\right)+{\kappa A_{k}^2 }\left\|D_{k}\right\|\left\|G_{k}\right\|- A_{k} D_{k}^{T} G_{k}+\frac{ A_{k}^{2} L \sigma_{\mathrm{ub}}}{2}\left\|D_{k}\right\|\left\|G_{k}\right\| \\
				& \leq \phi\left(X_{k}\right)- A_{k} D_{k}^{T} G_{k}+\left(\kappa A_{k}^2+\frac{ A_{k}^{2} L \sigma_{\mathrm{ub}}}{2}\right)\left\|D_{k}\right\|\left\|G_{k}\right\|.
			\end{aligned}
			$$
			Note that \cref{eq:ass1iinoiseless} holds whenever
			$$
			\begin{aligned}
				&\phi\left(X_{k}\right)- A_{k} D_{k}^{T} G_{k}+\left(\kappa A_{k}^2+\frac{ A_{k}^{2} L \sigma_{\mathrm{ub}}}{2}\right)\left\|D_{k}\right\|\left\|G_{k}\right\|
				\leq 
				\phi(X_k) -  A_k\theta\dotp{D_k,G_k},
			\end{aligned}
			$$
			or equivalently,
			\begin{equation*}
	 A_{k}	
				\leq \frac{2(1- \theta)\dotp{D_k,G_k}}{\left({2\kappa+ L \sigma_{\mathrm{ub}}}\right)\left\|D_{k}\right\|\left\|G_{k}\right\|}.
			\end{equation*}
		By \Cref{prop:innerproduct}, this inequality is satisfied whenever 
		$$ A_k \leq \frac{2(1- \theta)\sigma_{\mathrm{lb}}}{\left({2\kappa+ L \sigma_{\mathrm{ub}}}\right)\sigma_{\mathrm{ub}}}.$$
		Hence, if $ A_{k}\leq\bar\alpha$ and the iteration is true, then \cref{eq:ass1iinoiseless} is satisfied.
	
			\item Suppose $\norm{G_k - \nabla \phi(X_k)} \leq \varepsilon_g$. Since $k < T_\varepsilon$, we have that  $\norm{\nabla{\phi(X_k)}} > \varepsilon \geq \frac{\varepsilon_g}{\eta}$ by \Cref{ass:inequality}. Therefore, $\norm{G_k - \nabla \phi(X_k)} \leq \eta\norm{\nabla\phi(X_k)}$. Combined with the fact that 
			$$ A_k \leq \bar{\alpha} \leq \frac{2\left((1-\theta)(1-\eta)\frac{\sigma_{\mathrm{lb}}}{\sigma_{\mathrm{ub}}}-\eta\right)}{L\sigma_{\mathrm{ub}}(1-\eta)},$$
			by \Cref{ass:Lipschitz} and Lemma 4.23 of \cite{berahas2021global} (applied with $\varepsilon_f = 0$), we have that \cref{eq:ass1iinoiseless} is satisfied. 
		\end{enumerate}
		Now, recalling the definitions of $E_k$ and $E_k^+$ and using the fact that $E_{k} + E_{k}^+ \leq 2\varepsilon_f$ (since $I_k = 1$), \cref{eq:ass1iinoiseless} implies
		$$
		f(X_k -  A_kD_k) \leq f(X_k) -  A_k\theta\dotp{D_k,G_k} +E_k+E_k^+ \leq f(X_k) -  A_k\theta\dotp{D_k,G_k} +2\varepsilon_f,
		$$ 
		and so the $k$th iteration is successful.

		\item[5.]  On any unsuccessful iteration, $Z_{k+1}=Z_k$, and so the claim is trivial in that case. 
		Suppose the $k$th iteration is successful. 
		Then, by the definition of a successful iteration,
		$$f(X_{k+1}) - f(X_k) \leq -  A_k\theta\dotp{D_k,G_k}+ 2\varepsilon_f \leq  2\varepsilon_f,$$ 
		since $\dotp{D_k,G_k}\geq 0$ by construction. 
		Thus, $\phi(X_{k+1}) - \phi(X_k) \leq 2\varepsilon_f + E_{k}+E_{k}^+$.
		Because $Z_{k+1} - Z_k = \phi(X_{k+1}) - \phi(X_k)$, we have proven the claim.
	\end{itemize}
 \qed
\end{proof}

\end{document}